\newcommand{\dd}{\,\mathrm{d}}
\newcommand{\norm}[1]{\left\lVert#1\right\rVert}
\begin{document}

\title{\Large A Simple Note on the Basic Properties of\\Subgaussian Random Variables}

\author{\name Yang Li \email liyang@g.ecc.u-tokyo.ac.jp \\
       \addr International Research Center for Neurointelligence\\
       the University of Tokyo\\
       7-3-1 Hongo, Bunkyo-ku, Tokyo 113-0033, Japan}


\maketitle

\begin{abstract}
\indent This note provides a basic description of subgaussianity,
by defining $(\sigma, \rho)$-subgaussian random variables $X$ ($\sigma>0, \rho>0$) as those
satisfying $\mathbb{E}(\exp(\lambda X))\leq \rho\exp(\frac{1}{2}\sigma^2\lambda^2)$
for any $\lambda\in\mathbb{R}$.
The introduction of the parameter $\rho$ may be particularly useful for those seeking to refine
bounds, or align results from different sources, in the analysis of stochastic processes and
concentration inequalities.
\end{abstract}


\section{Introduction\label{sec:intro}}
A subgaussian random variable refers to one whose probability distribution tail decays (at least)
as fast as that of a Gaussian variable. A well-known bound for the standard normal cumulative
distribution function $\Phi(x)$ is given by
\begin{equation}\label{eq:gaussian_tail}
\sqrt{\frac{2}{\pi}}\cdot\frac{1}{x+\sqrt{x^2+4}}\exp\left(-\frac{x^2}{2}\right)
< 1-\Phi(x) \leq
\sqrt{\frac{2}{\pi}}\cdot\frac{1}{x+\sqrt{x^2+8/\pi}}\exp\left(-\frac{x^2}{2}\right)
\end{equation}
for all $x\geq 0$~\cite{Mitrinovic1970}; accordingly, the tail probability of a subgaussian
variable is bounded above by $\rho \exp(-x^2/2\sigma^2)$ with some constants $\rho,\,\sigma^2>0$.
Actually, many distributions other than the Gaussian, such as the Gamma and Weibull distributions
with large shape parameters, and distributions with bounded supports, can exhibit subgaussian
properties. The concept of subgaussianity is useful in many applications in probability theory
and statistics, notably in analyzing random processes and deriving concentration inequalities.\par

Subgaussian variables can be defined through various equivalent characterizations, such as
upper-bounded tail probability, higher-order moments, moment-generating function, and Orlicz
norms~\cite{Vershynin2018,Wainwright2019,Philippe2015,Pauwels2020,Deligiannidis2021}.
Each characterization involves a ``variance proxy'' (such as the previously mentioned $\sigma^2$)
serving as a parameter to quantify statistical dispersion or the rate of tail decay, and the
variance proxies differ only by an absolute constant factor across characterizations. Textbooks
on subgaussianity commonly address this equivalence, though they typically do not focus on finding
potentially better bounds.
Existing texts often fix the values of $\rho$ or its counterparts at simple values like $1$ or $2$
while discussing the variance proxies; however, for those prioritizing lighter distribution tails,
it may be possible to com-promise the value of $\rho$ in pursuit of a more favorable $\sigma^2$.
On the other hand, different fixed values of $\rho$ may be chosen in applications, according to
the author's convention; for instance, both $\mathbb{E}\left[\exp(X^2/\sigma^2)\right]\leq 2$
\cite{Vershynin2018,Pauwels2020,Deligiannidis2021,Rivasplata2012} and
$\mathbb{E}\left[\exp(X^2/\sigma^2)\right]\leq\exp(1)$~\cite{Lan2012,Jin2019} are frequently used
assumptions, which can cause inconvenience when comparing the corresponding results.
It is also worth noting that some texts require subgaussian variables to be centered, depending on
the specific characterization used to define subgaussianity~\cite{Philippe2015,Rivasplata2012}.
For example, when a random variable $X$ is defined as subgaussian with its moment-generating
function satisfying $\mathbb{E}\left[\exp(\lambda X)\right]\leq \rho\exp(\sigma^2\lambda^2/2)$
($\lambda\in\mathbb{R}$), fixing $\rho$ at $1$ naturally implies
$\mathbb{E}\left[X\right]=0$~\cite{Rivasplata2012, Stromberg1994}.\par

This note aims to restate the basic properties of subgaussian variables by introducing the
parameter $\rho$ alongside the variance proxy. These properties include equivalent
characterizations of subgaussianity (without requiring the variables to be centered), its closure
under simple operations, and an application to martingale differences. While the derivations are
largely drawn from existing materials (with necessary modifications), I hope this flexible
treatment of subgaussianity will benefit those seeking refined bounds or aligned results from
different sources, in the analysis of large deviations and concentration inequalities.\par

\section{Characterizations and Properties\label{sec:properties}}

\subsection{Equivalent characterizations\label{sec:equivalence}}
Let us begin the main body with the adjusted definition of subgaussian variables, followed
by Theorem~\ref{Thm:equivalence}, which states the equivalence of various characterizations.\par

\begin{definition}\label{Def:subgaussian}
A random variable $X\in\mathbb{R}$ is called $(\sigma, \rho)$-subgaussian if there exist constants
$\sigma>0$ and $\rho\geq 1$ such that the moment-generating function of $X$ satisfies
\begin{equation}\label{eq:equivalence_definition}
\mathbb{E}\left[\exp(\lambda X)\right]
\leq
 \rho\exp\Big(\frac{1}{2}\sigma^2 \lambda^2 \Big),
\text{\quad for all $\lambda\in\mathbb{R}$}.
\end{equation}
\end{definition}

\begin{theorem}[Equivalent characterizations of subgaussianity\label{Thm:equivalence}]
For a random variable $X\in\mathbb{R}$ and constants $\sigma_1, \sigma_2, \sigma_3, \sigma_4,
\sigma_5>0$, $\rho_1, \rho_3, \rho_4\geq 1$, $\rho_2\geq 0$ and $\rho_5\geq 1/2$, the following
properties are equivalent in the sense that, there exist absolute constants $C_{ij}$ and mappings
$\varphi_{ij}$ such that the implication from $(\textup{j})$ to $(\textup{i})$ holds for all $\sigma_i$ and $\rho_i$ whenever $\sigma_i\geq C_{ij}\sigma_j$ and $\rho_i\geq\varphi_{ij}(\rho_j)$:
\begin{itemize}
\setlength\itemsep{0pt}
	\item[\textup{(1)}]
	The distribution tail of $\lvert X\rvert$ satisfies
	\begin{equation}\nonumber
		\mathbb{P}\left[ \lvert X\rvert \geq \lambda \sigma_1 \right]
		\leq \rho_1\exp(-\lambda^2/2), \text{\quad for all $\lambda\geq 0$}.
	\end{equation}
	\item[\textup{(2)}]
	The moments of even orders of $X$ satisfy
	\begin{equation}\nonumber
		\mathbb{E}\,\big[ X^{2k} \big] \leq
		\rho_2 \cdot \sigma_2^{2k} \cdot k!, \text{\quad for all positive integers $k$}.
	\end{equation}
	\item[\textup{(3)}]
	The moment-generating function of $X^2$ is finite at a specific point such that
	\begin{equation}\nonumber
		\mathbb{E}\left[ \exp( X^2/\sigma_3^2) \right]\leq \rho_3.
		\qquad\qquad%
	\end{equation}
	\item[\textup{(4)}]
	$X$ is $(\sigma_4, \rho_4)$-subgaussian, namely, the moment-generating function of $X$ satisfies
	\begin{equation}\nonumber
		\mathbb{E}\left[ \exp(\lambda X) \right] \leq
		\rho_4 \exp\Big(\frac{1}{2}\sigma_4^2 \lambda^2\Big),
		\text{\quad for all $\lambda\in\mathbb{R}$}.
	\end{equation}
	\item[\textup{(5)}]
	The distribution tails of $X$ satisfy
	\begin{equation}\nonumber
		\max\big\{ \mathbb{P}\left[ X \geq \lambda\sigma_5 \right],
		            \mathbb{P}\left[ X \leq -\lambda\sigma_5\right]\big\}
		\leq \rho_5\exp(-\lambda^2/2),
		\text{\quad for all $\lambda\geq 0$}.
	\end{equation}
\end{itemize}
\end{theorem}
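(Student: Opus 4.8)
The plan is to prove the five characterizations equivalent by closing a single directed cycle of implications,
$(\textup{4})\Rightarrow(\textup{5})\Rightarrow(\textup{1})\Rightarrow(\textup{2})\Rightarrow(\textup{3})\Rightarrow(\textup{4})$.
Since each of the five bounds is monotone—enlarging its $\sigma_i$ or its $\rho_i$ only weakens it—it suffices to exhibit, for every edge, one admissible pair $(C,\varphi)$; the ``whenever $\sigma_i\geq C_{ij}\sigma_j$, $\rho_i\geq\varphi_{ij}(\rho_j)$'' quantifier then follows from monotonicity, and the implications for the off-cycle pairs follow by composing edges. Because every edge map $\varphi$ I obtain is nondecreasing, the composed constant $C_{ij}$ (a product of edge constants) and the composed map $\varphi_{ij}$ (a composition of edge maps) are again an absolute constant and a nondecreasing map, as required. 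I would track the constants edge by edge and leave their composition to the end.

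First, for $(\textup{4})\Rightarrow(\textup{5})$ I would use the Chernoff bound: for $\lambda,t\geq0$, Markov's inequality gives $\mathbb{P}[X\geq\lambda\sigma_5]\leq e^{-t\lambda\sigma_5}\mathbb{E}[e^{tX}]\leq\rho_4\exp(-t\lambda\sigma_5+\tfrac12\sigma_4^2t^2)$, and optimizing over $t$ (valid for any $\sigma_5\geq\sigma_4$) yields $\rho_4 e^{-\lambda^2/2}$; the lower tail is identical with $-t$, so $(\textup{5})$ holds with $\sigma_5=\sigma_4$ and $\rho_5=\rho_4$. The edge $(\textup{5})\Rightarrow(\textup{1})$ is a union bound, $\mathbb{P}[|X|\geq\lambda\sigma_1]\leq\mathbb{P}[X\geq\lambda\sigma_1]+\mathbb{P}[X\leq-\lambda\sigma_1]$, giving $\sigma_1=\sigma_5$ and $\rho_1=2\rho_5$ (note $\rho_1\geq1$ follows from $\rho_5\geq\tfrac12$).

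For $(\textup{1})\Rightarrow(\textup{2})$ I would integrate the tail: from $\mathbb{E}[X^{2k}]=\int_0^\infty 2k\,u^{2k-1}\mathbb{P}[|X|>u]\dd u$, substituting $u=\lambda\sigma_1$ and inserting the tail bound reduces matters to $\int_0^\infty\lambda^{2k-1}e^{-\lambda^2/2}\dd\lambda=2^{k-1}(k-1)!$ (set $v=\lambda^2/2$ and recognize $\Gamma(k)$), whence $\mathbb{E}[X^{2k}]\leq\rho_1(\sqrt2\,\sigma_1)^{2k}k!$, i.e. $\sigma_2=\sqrt2\,\sigma_1$ and $\rho_2=\rho_1$. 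Then $(\textup{2})\Rightarrow(\textup{3})$ follows by expanding $\mathbb{E}[e^{X^2/\sigma_3^2}]=\sum_{k\geq0}\mathbb{E}[X^{2k}]/(k!\,\sigma_3^{2k})$ (monotone convergence, all terms nonnegative) and dominating it by the geometric series $1+\rho_2\sum_{k\geq1}(\sigma_2/\sigma_3)^{2k}$, which converges once $\sigma_3>\sigma_2$; the choice $\sigma_3=\sqrt2\,\sigma_2$ makes the ratio $\tfrac12$ and gives $\rho_3=1+\rho_2\geq1$.

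Finally, for $(\textup{3})\Rightarrow(\textup{4})$ I would decouple through the elementary inequality $\lambda x\leq|\lambda x|\leq\tfrac12 a^2\lambda^2+\tfrac{1}{2a^2}x^2$ (AM--GM/Young), taking $a^2=\sigma_3^2/2$ so that $e^{\lambda X}\leq e^{\sigma_3^2\lambda^2/4}\,e^{X^2/\sigma_3^2}$; averaging gives $\mathbb{E}[e^{\lambda X}]\leq\rho_3\,e^{\sigma_3^2\lambda^2/4}$, so $(\textup{4})$ holds for every $\sigma_4\geq\sigma_3/\sqrt2$ with $\rho_4=\rho_3$. I expect this last decoupling to be the main conceptual obstacle, since it is the only step that is not a direct tail/moment manipulation; the remaining difficulty is purely bookkeeping—checking that each intermediate $\rho$ stays within its stated range (e.g. $\rho_5\geq\tfrac12$, $\rho_2\geq0$, the others $\geq1$) and that the $C_{ij}$ and $\varphi_{ij}$ produced by composing the cycle remain absolute constants and nondecreasing maps.
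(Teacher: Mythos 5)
Your proposal is correct and takes essentially the same approach as the paper: the identical cycle of implications $(1)\Rightarrow(2)\Rightarrow(3)\Rightarrow(4)\Rightarrow(5)\Rightarrow(1)$ (merely written starting from $(4)$), with the same techniques edge by edge---layer-cake tail integration for the moments, power-series expansion with a geometric-series bound (you fix the paper's free parameter $\lambda$ at $1/2$), the Young-type decoupling $\lambda X\leq \tfrac{1}{4}\sigma_3^2\lambda^2 + X^2/\sigma_3^2$, the Chernoff bound, and the union bound---and the same closing appeal to monotonicity and transitivity for the remaining pairs. The paper's extra direct implications (e.g.\ $(3)\Rightarrow(1)$, $(4)\Rightarrow(3)$) serve only to sharpen the constants in its table, not to establish the equivalence, so your proof is complete as stated.
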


~\pagebreak
\begin{remark}\label{Rmk:subgaussian1}
A set of applicable choices for $C_{ij}$ and $\varphi_{ij}(\rho)$ in Theorem~\ref{Thm:equivalence}
is listed in the table below, where $\lambda\in(0,1)$ is a parameter.
Note that these choices are not necessarily sharp.
\begin{table}[h]
\begin{center}
\begin{threeparttable}
	\begin{tabular}{|c||*{5}{c|}}\hline
		\backslashbox{Concl.}{Hyp.}
		&\makebox[3em]{$(1)$}
		&\makebox[3em]{$(2)$}
		&\makebox[3em]{$(3)$}
		&\makebox[3em]{$(4)$}
		&\makebox[3em]{$(5)$}\\[6pt]\hline\hline
		$(1)$
		&	--
		&	$\sqrt{\frac{1}{2\lambda}},  1+\frac{\lambda\rho}{1-\lambda}$
		&	$\sqrt{\frac{1}{2}}, \rho$~\tnote{$\dagger$}
		&   $1, 2\rho$
		&	$1, 2\rho$~\tnote{$\dagger$}
		\\[5pt]
		$(2)$
		&	$\sqrt{2}, \rho$~\tnote{$\dagger$}
		&	--
		&	$1,       \rho-1$~\tnote{$\dagger$}
		&	$\sqrt{2}, 2\rho$
		&	$\sqrt{2}, 2\rho$
		\\[5pt]
		$(3)$
		&	$\sqrt{\frac{2}{\lambda}}, \frac{1}{1-\lambda}\rho^{\lambda}$~\tnote{$\dagger$}
		&	$\sqrt{\frac{1}{\lambda}},  1+ \frac{\lambda\rho}{1-\lambda}$~\tnote{$\dagger$}
		&	--
		&	$\sqrt{\frac{2}{\lambda}}$, Eq.~\eqref{eq:equivalence_varphi34}~\tnote{$\dagger$}
		&	$\sqrt{\frac{2}{\lambda}}, \frac{1}{1-\lambda}(2\rho)^\lambda$
		\\[5pt]
		$(4)$
		&	$\sqrt{\frac{1}{\lambda}}, \frac{1}{1-\lambda}\rho^\lambda$
		&	$\sqrt{\frac{1}{2\lambda}}, 1+\frac{\lambda\rho}{1-\lambda}$
		&	$\sqrt{\frac{1}{2}}, \rho$~\tnote{$\dagger$}
		&	--
		&	$\sqrt{\frac{1}{\lambda}}, \frac{1}{1-\lambda}(2\rho)^\lambda$
		\\[5pt]
		$(5)$
		&	$\sqrt{\frac{1}{\lambda}}, \frac{1}{1-\lambda}\rho^\lambda$
		&	$\sqrt{\frac{1}{2\lambda}}, 1+\frac{\lambda\rho}{1-\lambda}$
		&	$\sqrt{\frac{1}{2}}, \rho$
		&	$1, \rho$~\tnote{$\dagger$}
		&	--\\[5pt]\hline
	\end{tabular}
\end{threeparttable}
\begin{tablenotes}
\item[$\dagger$] {\small Derivations of the entries marked with $\dagger$ are provided in the proof.
The others simply follow from the transitivity of implications, and their details are omitted.}
\end{tablenotes}
\end{center}
\end{table}
\end{remark}

\begin{proof}
\\\indent
{\bf (1)$\implies$(2)}~(according to Ref.~\cite{Vershynin2018}): Given property~(1), we have
\begin{equation}\nonumber
	\begin{split}
		\mathbb{E}\left[ \lvert X\rvert^n \right]
		&= \int_0^\infty \mathbb{P}\left[ \lvert X\rvert^n \geq x^n \right] \dd x^n
		= n\int_0^\infty \mathbb{P}\left[ \lvert X\rvert \geq \lambda\sigma_1 \right]\,
		                 (\lambda\sigma_1)^{n-1} \dd(\lambda\sigma_1)\\
		&\leq n\rho_1\sigma_1^n
		\int_0^\infty \exp(-\lambda^2/2)\,\lambda^{n-1} \dd\lambda
		= \frac{1}{2} n\rho_1 \big(\sqrt{2}\sigma_1\big)^n
		\int_0^\infty \exp(-x)\,x^{\tfrac{n-2}{2}} \dd x\\
		&= \rho_1\cdot \big(\sqrt{2}\sigma_1\big)^n \cdot\Gamma\left(\frac{n}{2}+1\right)
	\end{split}
\end{equation}
for any $n\geq 1$, where $\Gamma(\,\cdot\,)$ denotes the gamma function.
When $n=2k$ with $k=1,2,3\dots$, property~(2) is obtained with $\sigma_2\geq\sqrt{2}\sigma_1$
and $\rho_2\geq \rho_1$; that is, $C_{21}=\sqrt{2}$ and $\varphi_{21}(\rho) = \rho$.\par
\vspace{3pt}
{\bf (2)$\implies$(3)}~(according to Ref.~\cite{Vershynin2018}, with adjustments):
Given property~(2), we find that the moment-generating function of $X^2$ satisfies
\begin{equation}\nonumber
\begin{split}
	\mathbb{E}\left[ \exp(\lambda X^2/\sigma_2^2 ) \right]
	&=    \sum_{k=0}^\infty \frac{\lambda^k\, \mathbb{E}[ X^{2k}]}{k!\,\sigma_2^{2k}}\,\\[-3pt]
	&\leq 1 + \rho_2\sum_{k=1}^\infty \lambda^k
	=     1 + \frac{\lambda \rho_2}{1-\lambda},
\end{split}
\end{equation}
for all $\lambda\in [0,1)$.
We observe that the rightmost side of the above expression increases with $\lambda$;
we can freely select any $\lambda\in(0,1)$ and accordingly set
$C_{32}=\sqrt{1/\lambda}$, $\varphi_{32}(\rho)=1+\lambda\rho/(1-\lambda)$,
with which property~(3) holds.\par
\vspace{3pt}
{\bf (3)$\implies$(4)}~(according to \cite{Wainwright2019}, with adjustments):
From property~(3) and the inequality $\lambda X\leq \lambda^2\sigma_3^2/4 + X^2/\sigma_3^2$,
we find
\begin{equation}\nonumber
\begin{split}
	\mathbb{E}\left[ \exp(\lambda X) \right]
	&\leq \mathbb{E}\left[\exp(\lambda^2\sigma_3^2/4+ X^2/\sigma_3^2) \right]\\
	&=    \exp(\lambda^2\sigma_3^2/4)\cdot \mathbb{E}\left[ \exp(X^2/\sigma_3^2)\right]\\
	&\leq \rho_3 \exp\big( \lambda^2 \sigma_3^2/4 \big)
\end{split}
\end{equation}
for all $\lambda\in\mathbb{R}$.
Therefore, property~(4) holds for $\sigma_4\geq\sqrt{2}\sigma_3/2$
and $\rho_4\geq\rho_3$, i.e., $C_{43}=\sqrt{1/2}$ and $\varphi_{43}(\rho)=\rho$.\par
\vspace{3pt}
{\bf (4)$\implies$(5)}~(generic Chernoff bound, see~\cite{Vershynin2018, Wainwright2019}):
Given property~(4), Markov's inequality implies
\begin{equation}\nonumber
\begin{split}
	\mathbb{P}\left[ X \geq \lambda\sigma_5 \right]
	&=    \mathbb{P}\left[ \exp( x X) \geq \exp(x\lambda\sigma_5) \right]\\
	&\leq \mathbb{E}\left[ \exp(x X) \right]\exp(-x\lambda\sigma_5)\\
	&\leq \rho_4\exp\left(\sigma_4^2 x^2/2 - \lambda\sigma_5 x\right)
\end{split}
\end{equation}
for all $\lambda\geq 0$ and $x>0$.
Note that
$\mathbb{P}\left[X\geq\lambda\sigma_5\right]\leq\rho_4\exp(\sigma_4^2 x^2/2-\lambda\sigma_5 x)$
holds trivially for $x=0$. Minimizing the right-hand side with respect to $x$, we obtain
\begin{equation}\nonumber
\begin{split}
\mathbb{P}\left[ X \geq \lambda \sigma_5 \right]
	&\leq \rho_4\inf_{x\geq 0} \exp\left(\sigma_4^2 x^2/2 - \lambda\sigma_5 x\right)\\
	&= \rho_4\exp\left[-{\lambda^2 \sigma_5^2}/2\sigma_4^2\right],
\end{split}
\end{equation}
where the infimum is attained at $x = \lambda\sigma_5/\sigma_4^2$.
Since property~(4) is invariant when replacing $X$ with $-X$, we similarly find
$\mathbb{P}[ -X \geq \lambda \sigma_5 ] \leq \rho_4\exp[-{\lambda^2 \sigma_5^2}/2\sigma_4^2]$
for all $\lambda\geq 0$.
Therefore, property~(5) holds whenever $\sigma_5\geq \sigma_4$ and $\rho_5\geq \rho_4$, i.e.,
$C_{54}=1$ and $\varphi_{54}(\rho)=\rho$.\par
\vspace{3pt}
{\bf (5)$\implies$(1)}:
This is trivial, with $C_{15}=1$ and $\varphi_{15}(\rho)=2\rho$.\par

So far, we have demonstrated the equivalence of all characterizations; any $C_{ij}$ and
$\varphi_{ij}$ not directly addressed can be determined using the transitivity of implications.
Furthermore, improvements can be made to certain $\varphi_{ij}$ with the following additional
implications.\par

\vspace{3pt}
{\bf (3)$\implies$(1)}~\cite{Vershynin2018}:
This follows immediately from Markov's inequality, since
\begin{equation}\nonumber
\begin{split}
\mathbb{P}\left[ \lvert X\rvert \geq \lambda\sigma_3/\sqrt{2} \right]
	&=    \mathbb{P}\left[ X^2/\sigma_3^2 \geq \lambda^2/2 \right]\\
	&\leq \mathbb{E}\left[ \exp(X^2/\sigma_3^2) \right] \exp(-\lambda^2/2)\\
	&\leq \rho_3\exp(-\lambda^2/2)
\end{split}
\end{equation}
for any $\lambda\geq 0$, given property~(3).
Therefore, property~(1) holds for all $\sigma_1\geq \sqrt{2}\sigma_3/2$ and $\rho_1\geq \rho_3$,
i.e., $C_{13}=\sqrt{1/2}$ and $\varphi_{13}(\rho)=\rho$. The value $C_{13}=\sqrt{1/2}$ obtained here
equals that derived via chain of implications, $C_{15}C_{54}C_{43}$, while $\varphi_{15}(\rho)=\rho$
obtained here is slightly better than the composition
$\varphi_{15}\circ\varphi_{54}\circ\varphi_{43}(\rho)=2\rho$.\par
\vspace{3pt}
{\bf (3)$\implies$(2)}~\cite{Deligiannidis2021}:
By expanding property~(3) into a power series, we obtain
\begin{equation}\nonumber
	\mathbb{E}\left[ \exp( X^2/\sigma_3^2) \right]
	=1 + \sum_{k=1}^\infty \frac{\mathbb{E}\left[X^{2k}\right]}{k!\,\sigma_3^{2k}}
	\leq \rho_3.
\end{equation}
Since every term in the sum is non-negative, it follows that
\begin{equation}\nonumber
	\mathbb{E}\,\big[X^{2k}\big] \leq (\rho_3-1)\cdot \sigma_3^{2k}\cdot k!
\end{equation}
for all positive integers $k$. Therefore, property~(2) holds, with $C_{23}=1$ and
$\varphi_{23}(\rho)=\rho-1$, where $\varphi_{23}(\rho)$ is slightly better than the composition
$\varphi_{21}\circ\varphi_{13}(\rho)=\rho$.\par
\vspace{3pt}
{\bf (1)$\implies$(3)}:
This can be derived from the chain (1)$\implies$(2)$\implies$(3) or directly as shown in
Refs.~\cite{Deligiannidis2021,Rivasplata2012}. However, it should be noted that property~(1) may
lead to a slightly better conclusion than property~(2). Given property~(1), we have
\begin{equation}\nonumber
\begin{split}
\mathbb{E}\left[ \lvert X\rvert^n \right]
	&=    n\int_0^\infty \mathbb{P}\left[ \lvert X\rvert \geq \lambda\sigma_1 \right]\,
	      (\lambda\sigma_1)^{n-1} \dd (\lambda\sigma_1)\\
	&\leq n\sigma_1^n \int_0^\infty
	      \min\big\{\rho_1 \exp(-\lambda^2/2), 1\big\}\,\lambda^{n-1} \dd\lambda\\
	&=    \frac{1}{2} n\big(\sqrt{2}\sigma_1\big)^n \int_0^\infty
	      \min\big\{ \rho_1\exp(-x), 1\big\}\, x^{\tfrac{n-2}{2}} \dd x\\
	&=    \frac{1}{2} n\big(\sqrt{2}\sigma_1\big)^n
		  \bigg(\int_0^{\ln \rho_1}\mkern-5mu x^{\tfrac{n-2}{2}} \dd x +
	      \rho_1\int_{\ln \rho_1}^\infty\mkern-5mu\exp(-x)\,x^{\tfrac{n-2}{2}} \dd x\bigg)\\
	&=    \rho_1\cdot \big(\sqrt{2}\sigma_1\big)^n \cdot
		  \Gamma\left(\frac{n}{2}+1,\, \ln\rho_1 \right)
\end{split}
\end{equation}
for any $n\geq 1$, where $\Gamma(\cdot\,,\cdot)$ denotes the upper incomplete gamma function.
When $n=2k$ with $k=1,2,3\dots$, we have
\begin{equation}\nonumber
\mathbb{E}\big[ X^{2k} \big]
	\leq \rho_1\cdot 2^k\sigma_1^{2k} \cdot \Gamma\left(k+1,\, \ln\rho_1\right)
	=    \sum_{i=0}^{k}\frac{(\ln\rho_1)^i}{i !} \cdot 2^k\sigma_1^{2k} \cdot k!, 
\end{equation}
which is slightly better than the result
$\mathbb{E}\big[ X^{2k} \big]\leq \rho_1 \cdot 2^k\sigma_1^{2k} \cdot k!$ in the derivation
for implication~(1)$\implies$(2). Furthermore, we find
\begin{equation}\nonumber
\begin{split}
	\mathbb{E}\left[ \exp(\lambda X^2/2\sigma_1^2 ) \right]
	&=    \sum_{k=0}^\infty \frac{\lambda^k\, \mathbb{E}[ X^{2k}]}{k!\,2^k\sigma_1^{2k}}
	\leq  \sum_{k=0}^\infty \lambda^k \sum_{i=0}^k\frac{(\ln\rho_1)^i}{i!} \\[-3pt]
	&=    \sum_{i=0}^\infty \frac{(\ln\rho_1)^i}{i!} \sum_{k=i}^\infty \lambda^k
	=     \frac{1}{1-\lambda}\sum_{i=0}^\infty \frac{(\lambda\ln\rho_1)^i}{i!}
	=     \frac{ \rho_1^\lambda}{1-\lambda},
\end{split}
\end{equation}
for all $\lambda\in[0,1)$.
Therefore, we can freely select any $\lambda\in(0,1)$, and property~(3) holds with
$C_{31}=\sqrt{2/\lambda}$ and $\varphi_{31}(\rho)=\rho^\lambda/(1-\lambda)$,
where $\varphi_{31}(\rho)$ is slightly better than the composition
$\varphi_{32}\circ\varphi_{21}(\rho)=1+\lambda\rho/(1-\lambda)$.\par
\vspace{3pt}
{\bf (4)$\implies$(3)} (according to~\cite{Wainwright2019}):
Starting from property~(4), we have
\begin{equation}\nonumber
\mathbb{E}\bigg[ \exp\Big(x X - \frac{1}{2\lambda}\sigma_4^2\,x^2 \Big)\bigg] \leq
\rho_4\exp\bigg( \frac{1}{2}\Big(1-\frac{1}{\lambda}\Big)\sigma_4^2 x^2 \bigg)
\end{equation}
for all $x\in\mathbb{R}$ and $\lambda\in (0, 1)$.
By integrating both sides with respect to $x$ on $(-\infty,+\infty)$ and using Fubini's
theorem, we obtain
\begin{equation}\nonumber
\frac{\sqrt{2\lambda\pi}}{\sigma_4}
	\mathbb{E}\left[ \exp\Big( \frac{\lambda X^2}{2\sigma_4^2} \Big) \right]\leq 
	\frac{\rho_4}{\sigma_4 }\cdot\frac{\sqrt{2\lambda\pi}}{\sqrt{1-\lambda}},
\end{equation}
which simplifies to
\begin{equation}\nonumber
	\mathbb{E}\left[ \exp\Big( \frac{\lambda X^2}{2\sigma_4^2} \Big) \right]\leq
	\frac{\rho_4}{\sqrt{1-\lambda}}
\end{equation}
for all $\lambda\in (0, 1)$.
Therefore, property~(3) holds with $C_{34}=\sqrt{2/\lambda}$ and
$\varphi_{34}(\rho)=\rho/\sqrt{1-\lambda}$.
Meanwhile, the implication chain (4)$\implies$(5)$\implies$(1)$\implies$(3) leads to $C_{31}C_{15}C_{54}=\sqrt{2/\lambda}$
and $\varphi_{31}\circ\varphi_{15}\circ\varphi_{54}=(2\rho)^\lambda/(1-\lambda)$.
We may finally choose
\begin{equation}\label{eq:equivalence_varphi34}
	\varphi_{34}(\rho) = (1-\lambda)^{-1} 
	\min\left\{ \rho\sqrt{1-\lambda},\, (2\rho)^\lambda\right\}.
\end{equation}
\end{proof}

\begin{remark}\label{Rmk:subgaussian2}
If $X$ is either non-negative or non-positive, i.e., $\mathbb{P}\left[X\geq 0\right]=1$ or
$\mathbb{P}\left[X\leq 0\right]=1$, then properties~\textup{(1)} and~\textup{(5)} in
Theorem~\ref{Thm:equivalence} will be the same.
The corresponding table for $C_{ij}$ and $\varphi_{ij}(\rho)$
in Remark~\ref{Rmk:subgaussian1} is shown below, where $\lambda\in(0,1)$ is a parameter.
\begin{table}[h]
\begin{center}
	\begin{threeparttable}
		\begin{tabular}{|c||*{4}{c|}}\hline
			\backslashbox{Concl.}{Hyp.}
			&\makebox[3em]{$(1)$}
			&\makebox[4em]{$(2)$}
			&\makebox[3em]{$(3)$}
			&\makebox[3em]{$(4)$}\\[6pt]\hline\hline
			$(1)$
			&	--
			&	~$\sqrt{\frac{1}{2\lambda}},  1+\frac{\lambda\rho}{1-\lambda}$~
			&	~$\sqrt{\frac{1}{2}}, \rho$~\tnote{$\dagger$}~
			&	$1, \rho$~\tnote{$\dagger$}
			\\[5pt]
			$(2)$
			&	$\sqrt{2}, \rho$~\tnote{$\dagger$}
			&	--
			&	~$1,       \rho-1$~\tnote{$\dagger$}~
			&	$\sqrt{2}, \rho$
			\\[5pt]
			$(3)$
			&	$\sqrt{\frac{2}{\lambda}}, \frac{1}{1-\lambda}\rho^{\lambda}$~\tnote{$\dagger$}
			&	~$\sqrt{\frac{1}{\lambda}},  1+ \frac{\lambda\rho}{1-\lambda}$~\tnote{$\dagger$}~
			&	--
			&	$\sqrt{\frac{2}{\lambda}}$,
				$\min\left\{ \frac{1}{\sqrt{1-\lambda}}\rho, \,
				\frac{1}{1-\lambda}\rho^{\lambda}\right\}$~\tnote{$\dagger$}
			\\[5pt]
			$(4)$
			&	$\sqrt{\frac{1}{\lambda}}, \frac{1}{1-\lambda}\rho^\lambda$
			&	~$\sqrt{\frac{1}{2\lambda}}, 1+\frac{\lambda\rho}{1-\lambda}$~
			&	~$\sqrt{\frac{1}{2}}, \rho$~\tnote{$\dagger$}~
			&	--
			\\[5pt]\hline
		\end{tabular}
	\end{threeparttable}
\begin{tablenotes}
	\item[$\dagger$] {\small Entries derived with a proof.
		The others follow from the transitivity of implications.}
\end{tablenotes}
\end{center}
\end{table}
\end{remark}

\subsection{Centered subgaussian variables\label{sec:centered}}
Definition~\ref{Def:subgaussian} and the equivalent characterizations in
Theorem~\ref{Thm:equivalence} do not require subgaussian variables to be centered (by
``subgaussian'' we mean that any of the properties in Theorem~\ref{Thm:equivalence} is satisfied).
Only $(\sigma, 1)$-subgaussian variables, as conventionally defined, are guaranteed to have a zero
mean. Conversely, any centered subgaussian variable must be $(\sigma, 1)$-subgaussian for some
$\sigma$.\par

\begin{theorem}[Centered subgaussian variables\label{Thm:centered}]
Let $X\in\mathbb{R}$ be a subgaussian variable, satisfying any of the properties in
Theorem~\ref{Thm:equivalence}, and assume that $\mathbb{E}\left[X\right]=0$.
Then $X$ must be $(\sigma, 1)$-subgaussian, namely, its moment-generating function satisfies
\begin{equation}\nonumber
\mathbb{E}\left[ \exp(\lambda X) \right] \leq \exp\Big(\frac{1}{2}\sigma^2 \lambda^2\Big),
\end{equation}
for all $\lambda\in\mathbb{R}$, where $\sigma$ can be determined as follows:
\begin{itemize}
\setlength\itemsep{0pt}
	\item[\textup{(a)}]
	If property~$(3)$ in Theorem~\ref{Thm:equivalence} is satisfied, then
	\begin{equation}\nonumber
		\frac{\sigma^2}{\sigma_3^2} =
		\left\{
		\begin{aligned}
			& \frac{1}{2} + \frac{9}{8}\ln\rho_3,
			&&\textup{for } \ln\rho_3 \in \big[0, 4/9\big);\\
			& \frac{3}{2}\sqrt{\ln\rho_3},
			&&\textup{for } \ln\rho_3 \in \big[4/9, 16/9\big);\\
			& \frac{9}{8}\ln\rho_3,
			&&\textup{for } \ln\rho_3 \in \big[16/9, +\infty\big).
		\end{aligned}\right.
	\end{equation}
	\item[\textup{(b)}]	
	If property~$(2)$ in Theorem~\ref{Thm:equivalence} is satisfied, then
	\begin{equation}\nonumber
		\frac{\sigma^2}{\sigma_2^2} =
		\left\{
		\begin{aligned}
			& \frac{29}{45},
			&&\textup{for } \rho_2 \in \big[0, 29/90\big);\\
			& \frac{1}{2}\bigg[
			\sqrt{\Big(\rho_2 - \frac{19}{90}\Big)^2 + \frac{26}{15}\rho_2 }
			+ \bigg(\rho_2 + \frac{19}{90}\bigg)\bigg],
			&&\textup{for } \rho_2 \in \big[29/90, +\infty\big).
		\end{aligned}\right.
	\end{equation}
	\item[\textup{(c)}]	
	If property~$(1)$ in Theorem~\ref{Thm:equivalence} is satisfied, then
	\begin{equation}\nonumber
		\frac{\sigma^2}{\sigma_1^2} = \frac{1}{3}\big(8+7\ln\rho_1\big).
	\end{equation}
	\item[\textup{(d)}]
	If property~$(4)$ in Theorem~\ref{Thm:equivalence} is satisfied, then
	\begin{equation}\nonumber
		\frac{\sigma^2}{\sigma_4^2}
		=\frac{9}{8}\cdot
		\frac{\ln\rho_4 + \sqrt{(\ln\rho_4)^2 + 2\ln\rho_4}}{\ln\rho_4}
		\left[ \ln\rho_4 +
		\frac{1}{2}\ln\left( 1 + \ln\rho_4 + \sqrt{(\ln\rho_4)^2 + 2\ln\rho_4} \right)
		\right].
	\end{equation}
\end{itemize}
\end{theorem}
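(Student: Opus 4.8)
The plan is to reduce all four parts to a single objective: writing $\psi(\lambda)=\ln\mathbb{E}[\exp(\lambda X)]$, I must show $\psi(\lambda)\le\frac12\sigma^2\lambda^2$ for every $\lambda\in\mathbb{R}$, since this is exactly the $(\sigma,1)$-subgaussian bound. The centering hypothesis enters through the two facts $\psi(0)=0$ and $\psi'(0)=\mathbb{E}[X]=0$, together with the convexity of $\psi$; thus $\psi$ is nonnegative and grows at least quadratically away from the origin, and the whole difficulty is to produce the correct quadratic envelope. The crucial preliminary observation is that each of properties (1)--(4), combined with Jensen's inequality, pins down a \emph{logarithmic} second-moment bound: for instance (3) gives $\exp(\mathbb{E}[X^2]/\sigma_3^2)\le\mathbb{E}[\exp(X^2/\sigma_3^2)]\le\rho_3$, hence $\mathbb{E}[X^2]\le\sigma_3^2\ln\rho_3$, and analogous estimates hold for the other characterizations. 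This $\ln\rho$ scaling (rather than a naive $\rho$) is what will make each $\sigma^2/\sigma_i^2$ grow only like a constant times $\ln\rho_i$.

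For the template I would treat part (a) first. Using centering to subtract the linear term, I write $\mathbb{E}[\exp(\lambda X)]=1+\mathbb{E}[\exp(\lambda X)-1-\lambda X]$ and control the integrand in two complementary regimes. The elementary inequality $e^{u}-1-u\le\frac12 u^2 e^{u_+}$ gives $\mathbb{E}[\exp(\lambda X)]\le 1+\frac{\lambda^2}{2}\,\mathbb{E}[X^2 e^{(\lambda X)_+}]$. As $\lambda\to0$ the weight collapses and the coefficient reduces to $\frac12\mathbb{E}[X^2]\le\frac12\sigma_3^2\ln\rho_3$ by the Jensen estimate above---this is the origin of the $\ln\rho_3$ scaling. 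As $|\lambda|$ grows toward a crossover, the weighted second moment on $\{X>0\}$ is absorbed by property~(3) through an AM--GM split $\lambda x\le\frac{\sigma_3^2\lambda^2}{4\theta}+\frac{\theta x^2}{\sigma_3^2}$ followed by the pointwise estimate $x^2 e^{\theta x^2/\sigma_3^2}\le\frac{\sigma_3^2}{(1-\theta)e}\,e^{x^2/\sigma_3^2}$, with $\theta\in(0,1)$ a free tilt. For large $|\lambda|$ I would discard the second-order refinement and use the plain subgaussian bound obtained via implication (3)$\Rightarrow$(4) of Theorem~\ref{Thm:equivalence} (so $\sigma_4^2=\sigma_3^2/2$, $\rho_4=\rho_3$): once $\frac12(\sigma^2-\sigma_3^2/2)\lambda^2\ge\ln\rho_3$ the prefactor is swallowed and $\mathbb{E}[\exp(\lambda X)]\le\exp(\frac12\sigma^2\lambda^2)$ holds automatically.

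The remaining work is to glue the two regimes into one inequality valid for all $\lambda$ and to choose $\sigma$ as small as the method allows. Requiring the small-$\lambda$ bound to lie below $\exp(\frac12\sigma^2\lambda^2)$ up to the crossover and optimizing the tilt $\theta$ (equivalently, the crossover scale) produces a single scalar minimization whose solution is $\sigma^2/\sigma_3^2$. The three regimes in (a) plausibly arise because the minimizing $\theta$ is interior on a middle range of $\ln\rho_3$ (yielding the square-root law $\frac32\sqrt{\ln\rho_3}$, typical of an interior optimum of a product) but pinned at an endpoint of $(0,1)$ for very small or very large $\ln\rho_3$ (yielding the affine laws $\frac12+\frac98\ln\rho_3$ and $\frac98\ln\rho_3$); the matching of the branches at $\ln\rho_3=4/9$ and $16/9$, where $\sigma^2/\sigma_3^2$ equals $1$ and $2$ respectively, is the continuity check at the switch-points. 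Parts (b), (c), (d) follow the same pattern: I would either repeat the estimate starting from the even-moment bound (b), the tail bound (c), or the mgf bound (d), or first pass to property~(3)/(4) through the appropriate entry of Theorem~\ref{Thm:equivalence} and re-run the optimization; the square-root expressions in (b) and (d) and the linear law $\frac13(8+7\ln\rho_1)$ in (c) are the closed-form solutions of the corresponding one-variable minimizations.

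The main obstacle is the optimization itself, not the probabilistic input. Two points demand care. First, the envelope must be enforced \emph{simultaneously} at every $\lambda$, so matching the two regimes at a single crossover is not enough: I must verify that $\exp(\frac12\sigma^2\lambda^2)$ dominates the second-order estimate on its entire range and the subgaussian tail beyond it, which is where the precise constants ($9/8$ and $3/2$ in (a), $29/45$ in (b), the coefficients in $\frac13(8+7\ln\rho_1)$ in (c), and the full radical in (d)) and the thresholds ($29/90$, $4/9$, $16/9$) are fixed. Second, because the elementary inequalities used ($e^u-1-u\le\frac12u^2e^{u_+}$, the AM--GM split, and $x^2e^{\theta x^2/\sigma_3^2}\le\frac{\sigma_3^2}{(1-\theta)e}e^{x^2/\sigma_3^2}$) are individually lossy, the resulting $\sigma$ is deliberately non-sharp; much of the work is simply bookkeeping these losses so that the final closed forms emerge clean and continuous across the regime boundaries, exactly as recorded in the statement.
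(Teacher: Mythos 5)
Your plan breaks down at its central step, and the failure is quantitative, not cosmetic. In your intermediate regime the chain of estimates gives
\begin{equation}\nonumber
\mathbb{E}\left[\exp(\lambda X)\right]
\;\le\; 1 + \frac{\lambda^2}{2}\,\mathbb{E}\big[X^2 e^{(\lambda X)_+}\big]
\;\le\; 1 + \frac{\lambda^2\sigma_3^2\,\rho_3}{2(1-\theta)e}\,
\exp\Big(\frac{\sigma_3^2\lambda^2}{4\theta}\Big),
\end{equation}
so $\rho_3$ enters \emph{linearly} as a prefactor. Case (a) of the theorem asserts $\sigma^2/\sigma_3^2=\tfrac98\ln\rho_3$ for large $\rho_3$; test your bound at $\lambda^2\sigma_3^2=1/\ln\rho_3$. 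The target is $\exp(\tfrac12\cdot\tfrac98)=\exp(9/16)=O(1)$, while your bound is at least $1+\rho_3/(2e\ln\rho_3)\to\infty$, and this $\lambda$ is not rescued by your large-$\lambda$ regime either, since $\tfrac12(\sigma^2-\sigma_3^2/2)\lambda^2\to 9/16 < \ln\rho_3$ there. No choice of the tilt $\theta$ or of crossover points repairs this: pointwise inequalities like $x^2e^{\theta x^2/\sigma_3^2}\le\frac{\sigma_3^2}{(1-\theta)e}e^{x^2/\sigma_3^2}$ inevitably carry $\rho_3$ itself, rather than $\ln\rho_3$, into the coefficient of $\lambda^2$. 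The idea you are missing — the engine of the paper's proof — is Jensen's inequality in the power form $\mathbb{E}[Z^t]\le(\mathbb{E}[Z])^t$ for $t\in[0,1]$, applied to $Z=\exp(X^2/\sigma_3^2)$, combined with the numerical inequality $\exp(u)\le u+\exp(9u^2/16)$: for $\tfrac{9}{16}\lambda^2\sigma_3^2\le 1$ this yields
\begin{equation}\nonumber
\mathbb{E}\left[\exp(\lambda X)\right]
\le \lambda\,\mathbb{E}[X] +
\mathbb{E}\big[\exp(X^2/\sigma_3^2)\big]^{\frac{9}{16}\lambda^2\sigma_3^2}
\le \exp\Big(\tfrac{9}{16}\lambda^2\sigma_3^2\ln\rho_3\Big),
\end{equation}
so $\rho_3$ appears only through $\ln\rho_3$ in the exponent; the complementary split $\lambda X\le x\lambda^2\sigma_3^2/4+x^{-1}X^2/\sigma_3^2$ with $x\ge1$ and the same power-Jensen step gives $\exp(x\lambda^2\sigma_3^2/4+x^{-1}\ln\rho_3)$, and the three branches with constants $9/8$, $3/2$, $1/2$ and thresholds $4/9$, $16/9$ are exactly the piecewise maxima of these two envelopes. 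Your bookkeeping cannot produce those numbers because your elementary inequalities are not the ones they come from.

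A second, independent gap: your claim that (b), (c), (d) "follow the same pattern" does not hold. The stated formula in (b) grows linearly in $\rho_2$ for large $\rho_2$, not logarithmically, which already signals a different mechanism. The paper proves (b) and (c) by expanding $\mathbb{E}[\exp(\lambda X)]$ as a power series, dropping the linear term by centering, interpolating each odd moment via Cauchy--Schwarz as $\mathbb{E}[Y^{2k+1}]\le\tfrac12\big(x_k\mathbb{E}[Y^{2k}]+x_k^{-1}\mathbb{E}[Y^{2k+2}]\big)$, and then tuning the weights ($x_k=2(k+1)$ for $k\ge2$, with $x_1$ optimized) so that the coefficient ratios $c_{k+1}/c_k$ are uniformly dominated; the radical in (b) solves the resulting equation for $x_1$, not a regime-matching minimization. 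Case (d) is likewise not a re-run of your template: it passes through the implication (4)$\implies$(3) with a free parameter and reuses case (a), with the surrogate $x^\ast=\sqrt{(\ln\rho_4)^2+2\ln\rho_4}-\ln\rho_4$ standing in for a minimizer that has no closed form. In short, your proposal identifies the correct goal and a sensible regime structure, but the specific formulas in all four parts are unreachable by the inequalities you chose, and the parts beyond (a) require machinery your sketch does not contain.
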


\begin{proof}
\\\indent
{\bf Case (a)}~(adjusted from the proof for Lemma~2 in Ref.~\cite{Lan2012}):
From the numerical relation $\exp(x)\leq x + \exp(9x^2/16)$ ($x\in\mathbb{R}$), we find
\begin{equation}
\begin{split}\label{eq:centered_proof_case_a1}
\mathbb{E}\left[ \exp(\lambda X) \right]
	&\leq \lambda\mathbb{E}\left[X\right] +
	 \mathbb{E}\left[ \exp(\tfrac{9}{16}\lambda^2 X^2 ) \right]\\
	&\leq 0 +
	\mathbb{E}\left[\exp(X^2/\sigma_3^2)\right]^{{\tfrac{9}{16}\lambda^2 \sigma_3^2}}\\
	&\leq
	\exp\left( \tfrac{9}{16}\lambda^2\sigma_3^2\cdot \ln\rho_3 \right)
\end{split}
\end{equation}
for any $\lambda$ such that $9\lambda^2\sigma_3^2/16\leq 1$, i.e.,
$\lvert\lambda\rvert\sigma_3\leq 4/3$, where we used the assumption $\mathbb{E}\left[X\right]=0$,
Jensen's inequality, and property~(3) in Theorem~\ref{Thm:equivalence}. On the other hand, from the
inequality $\lambda X \leq x\lambda^2\sigma_3^2/4 + x^{-1}X^2/\sigma_3^2$ for $x>0$, we find
\begin{equation}\nonumber
\begin{split}
\mathbb{E}\left[ \exp(\lambda X) \right]
	&\leq \mathbb{E}\left[\exp( x\lambda^2\sigma_3^2/4 + x^{-1} X^2/\sigma_3^2) \right]\\
	&\leq \exp\left( x\lambda^2\sigma_3^2/4 \right)\cdot
	 \mathbb{E}\left[ \exp( X^2/\sigma_3^2) \right]^{1/x}\\
	&\leq \exp\left( x\lambda^2\sigma_3^2/4 + x^{-1}\ln\rho_3 \right)\\[-2pt]
\end{split}
\end{equation}
for all $x\geq 1$, where we again used Jensen's inequality and property~(3). By minimizing the
rightmost side with respect to $x$, we obtain
\begin{equation}\label{eq:centered_proof_case_a2}
\mathbb{E}\left[ \exp(\lambda X) \right]\leq
\left\{
\begin{aligned}
	& \exp\Big(  \sqrt{\ln\rho_3}\,\sigma_3\lvert\lambda\rvert \Big),
	&&\textup{for }\lvert\lambda\rvert\sigma_3 \leq 2\sqrt{\ln\rho_3};\\
	& \exp\Big( \tfrac{1}{4}\sigma_3^2 \lambda^2 + \ln\rho_3 \Big),
	&&\textup{for }\lvert\lambda\rvert\sigma_3 > 2\sqrt{\ln\rho_3}.
\end{aligned}
\right.
\end{equation}
Combining inequalities~\eqref{eq:centered_proof_case_a1} and~\eqref{eq:centered_proof_case_a2}, the
upper bound of the moment-generating function of $X$ on $\lambda\in R$ can be expressed as follows:
When $2\sqrt{\ln \rho_3}\leq 4/3$ (i.e., $\ln\rho_1\leq 4/9$), we have
\begin{equation}\nonumber
\mathbb{E}\left[ \exp(\lambda X)\right]
\leq\left\{
\begin{aligned}
	& \exp\Big( \tfrac{9}{16}\ln\rho_3 \cdot \sigma_3^2 \lambda^2 \Big),
	&&\textup{for }\lvert\lambda\rvert\sigma_3 \leq 4/3;\\
	& \exp\Big[ \big(\tfrac{1}{4} + \tfrac{9}{16}\ln\rho_3\big)\sigma_3^2\,\lambda^2 \Big],
	&&\textup{for }\lvert\lambda\rvert\sigma_3 > 4/3.
\end{aligned}
\right.
\end{equation}
When $\ln\rho_1> 4/9$, we have
\begin{equation}\nonumber
\mathbb{E}\left[ \exp(\lambda X) \right]\leq
\left\{
\begin{aligned}
	& \exp\Big( \tfrac{9}{16}\ln\rho_3\cdot\sigma_3^2  \lambda^2\Big),
	&&\textup{for }\lvert\lambda\rvert\sigma_3 \in \big[0, 4/3\big];\\
	& \exp\Big( \tfrac{3}{4}\sqrt{\ln\rho_3}\cdot\sigma_3^2 \lambda^2\Big),
	&&\textup{for }\lvert\lambda\rvert\sigma_3 \in \big(4/3, 2\sqrt{\ln\rho_3}\,\big];\\
	& \exp\Big(  \tfrac{1}{2} \sigma_3^2 \lambda^2 \Big),
	&&\textup{for }\lvert\lambda\rvert\sigma_3 \in \big(2\sqrt{\ln\rho_3}, +\infty\big).
\end{aligned}
\right.
\end{equation}
Note that
\begin{equation}\nonumber
\max\Big\{ \tfrac{9}{16}\ln\rho_3,\, \tfrac{3}{4}\sqrt{\ln\rho_3},\, \tfrac{1}{2} \Big\}\\
=\left\{
\begin{aligned}
	& \tfrac{9}{16}\ln\rho_3,		&&\textup{for } \ln\rho_1 > 16/9;\\
	& \tfrac{3}{4}\sqrt{\ln\rho_3},	&&\textup{for } 4/9 \leq\ln\rho_1 \leq 16/9.\\
\end{aligned}
\right.
\end{equation}
By choosing the greatest value of the coefficient for $\sigma_3^2\lambda^2$ in each interval
$\ln\rho_3\in [0, 4/9)$, $\ln\rho_3\in [4/9, 16/9)$, or $\ln\rho_3\in [16/9, +\infty)$, the claim
of case~(a) is proved.\par
\vspace{3pt}
{\bf Case (b)} (adjusted from~\cite{Wainwright2019}):
The moments of odd orders of a random variable $Y$ can be bounded according to the Cauchy--Schwarz
inequality, as
\begin{equation}\nonumber
\mathbb{E}\left[ Y^{2k+1} \right]
\leq
	\sqrt{\mathbb{E}\left[ Y^{2k}\right]\,\mathbb{E}\left[ Y^{2k+2}\right]}
\leq \frac{1}{2}\left(
	 x \mathbb{E}\big[ Y^{2k}\big] + x^{-1}\mathbb{E}\big[ Y^{2k+2}\big]\right)
\end{equation}
for all $x>0$ and $k=0,1,2,\dots$. Applying this for $\lambda X$ and substituting it into the power
series expansion of $\mathbb{E}\left[ \exp(\lambda X)\right]$, we find
\begin{equation}\nonumber
\begin{split}
\mathbb{E}\left[ \exp(\lambda X) \right]
	&= 1 + \lambda\mathbb{E}\left[X\right] + 
	\sum_{k=1}^\infty\left\{
	\frac{\mathbb{E}\left[(\lambda X)^{2k}  \right]}{(2k)!} +
	\frac{\mathbb{E}\left[(\lambda X)^{2k+1}\right]}{(2k+1)!}\right\} \\
	&\leq 1 + \sum_{k=1}^\infty\left\{
	\frac{\lambda^{2k}\mathbb{E}[X^{2k}]}{(2k)!} +
	\frac{x_k     \lambda^{2k}\mathbb{E}[X^{2k}]+
		  x_k^{-1}\lambda^{2k+2}\mathbb{E}[X^{2k+2}]}{2\cdot(2k+1)!}
	\right\}\\[-2pt]
	&\leq 1 + \rho_2\sum_{k=1}^\infty\left\{
	\frac{        \lambda^{2k}  \sigma_2^{2k}\,k!      }{(2k)!} +
	\frac{x_k     \lambda^{2k}  \sigma_2^{2k}\,k! +
		  x_k^{-1}\lambda^{2k+2}\sigma_2^{2k+2}\,(k+1)!}
	  	 {2\cdot(2k+1)!}
	\right\}\\[-2pt]
	&=   1 + \rho_2\cdot\frac{\lambda^2\sigma_2^2}{2!}\left(1+\frac{x_1}{6}\right)
	       + \rho_2\sum_{k=2}^\infty
		\frac{\lambda^{2k}\sigma_2^{2k}k!}{(2k)!}
		\left(\frac{k}{x_{k-1}}+1+\frac{x_k}{4k+2}\right)\\[-2pt]
	&=   1 + \sum_{k=1}^\infty c_k\frac{\lambda^{2k}\sigma_2^{2k}}{k!},
\end{split}
\end{equation}
where we have used the assumption $\mathbb{E}\left[X\right]=0$ and property~(2), and have defined
coefficients
\begin{equation}\nonumber
c_k\coloneqq\left\{
\begin{aligned}
	&  \frac{1}{2}\left(1+\frac{x_1}{6}\right)\rho_2,
	~&&\textup{for }k=1;\\
	&  \frac{(k!)^2}{(2k)!}\left(\frac{k}{x_{k-1}}+1+\frac{x_k}{4k+2}\right)\rho_2,
	~&&\textup{for }k=2,3,\dots,
\end{aligned}
\right.
\end{equation}
where $x_k>0$ are to be determined. By choosing $x_k=2(k+1)$ for $k\geq2$, we have
\begin{equation}\nonumber
c_k=\left\{
\begin{aligned}
	&  \frac{1}{2}\left(1+\frac{x_1}{6}\right)\rho_2,
	~&&\textup{for }k=1;\\
	&  \frac{1}{6}\left(\frac{2}{x_1}+\frac{8}{5}\right)\rho_2,
	~&&\textup{for }k=2;\\
	&  \frac{(k!)^2}{(2k)!}\left( 2 + \frac{1}{4k+2}\right)\rho_2,
	~&&\textup{for }k=3,4,\dots,
\end{aligned}
\right.
\end{equation}
and
\begin{equation}\nonumber
\left\{\begin{aligned}
c_1	&=	  \frac{1}{2}\left(1+\frac{x_1}{6}\right)\rho_2;\\
\frac{c_2}{c_1}	&=
	\frac{1}{3}\cdot\frac{2/x_1 + 8/5}{1+x_1/6} =
	\frac{2}{3x_1}\frac{1+\frac{4}{5}x_1}{1+\frac{1}{6}x_1} \leq
	\frac{2}{3x_1}\left[\frac{19}{120}\Big(x_1-6\Big) +\frac{29}{10} \right],
	~&&\textup{when }x_1\in(0, 6];\\
\frac{c_3}{c_2} &=
	\frac{3}{10}\cdot \frac{ 2 + 1/14 }{ 2/x_1 + 8/5 }\leq \frac{9}{28}<\frac{29}{90},
	~&&\textup{when }x_1\in(0, 6];\\
\frac{c_{k+1}}{c_k} &=
	\frac{(k+1)}{4(k+3/2)}\cdot\frac{(8k+13)}{(8k+5)}<0.29,
	~&&\textup{for }k=3,4,\dots.
\end{aligned}
\right.
\end{equation}
Now we need to choose the value of $x_1$ appropriately and find a common upper bound for $c_1$,
$c_2/c_1$, and all $c_{k+1}/c_k$ ($k\geq 2$). When $\rho_2\leq {29}/{90}$, we can choose $x_1=6$,
so that $c_1\leq 29/90$ and $c_{k+1}/c_k\leq 29/90$ for all $k\geq 1$.
When $\rho_2> {29}/{90}$, we choose $x_1=x_1^\ast\in(0,6)$ such that
\begin{equation}
	\frac{1}{2}\left(1+\frac{x_1^\ast}{6}\right)\rho_2
	= \frac{2}{3x_1^\ast}\left[\frac{19}{120}\Big(x_1^\ast-6\Big) +\frac{29}{10} \right]
	> \frac{29}{90},
\end{equation}
which leads to an explicit solution
\begin{equation}\nonumber
x_1^\ast = \frac{3}{\rho_2}\bigg[
\sqrt{\Big(\rho_2 - \frac{19}{90}\Big)^2 + \frac{26}{15}\rho_2 }
-\left(\rho_2 - \frac{19}{90}\right)\bigg]
\end{equation}
and accordingly
\begin{equation}\nonumber
c_1^\ast = \frac{1}{4}\bigg[
\sqrt{\Big(\rho_2 - \frac{19}{90}\Big)^2 + \frac{26}{15}\rho_2 }
+ \left(\rho_2 + \frac{19}{90}\right)\bigg].
\end{equation}
With such choices of $x_k$ ($k=1,2,3,\dots$), we finally have
\begin{equation}\nonumber
\begin{split}
\mathbb{E}\left[ \exp(\lambda X) \right]
&\leq  1 + \sum_{k=1}^\infty c_k\frac{\lambda^{2k}\sigma_2^{2k}}{k!}
 \leq  1 + \sum_{k=1}^\infty \frac{\lambda^{2k}\sigma_2^{2k}}{k!}
	\max\left\{ \frac{29}{90}, c_1^\ast\right\}^k\\
&\leq
	\exp\left( \max\Big\{\frac{29}{90}, c_1^\ast\Big\}\sigma_2^2\lambda^2\right),
\end{split}
\end{equation}
which proves the claim of case~(b).\par
\vspace{3pt}
{\bf Case (c)}: Property~(1) in Theorem~\ref{Thm:equivalence} indicates
$\mathbb{E}\,[ X^{2k}]\leq  f_k(\rho_1)\cdot(\sqrt{2}\sigma_1)^{2k}\cdot k!$, where
$f_k(\rho_1)\coloneqq\sum_{i=0}^k (\ln\rho_1)^i/i!$ for $k=1,2,3,\dots$, as shown in the proof for
(1)$\implies$(3) therein.
Similar to the proof above for case~(b), it is easy to find
\begin{equation}\nonumber
\begin{split}
&\mathbb{E}\left[ \exp(\lambda X) \right]
	\leq 1 + \sum_{k=1}^\infty\left\{
	\frac{\lambda^{2k}\mathbb{E}[X^{2k}]}{(2k)!} +
	\frac{x_k     \lambda^{2k}\mathbb{E}[X^{2k}]+
		x_k^{-1}\lambda^{2k+2}\mathbb{E}[X^{2k+2}]}{2\cdot(2k+1)!}
	\right\}\\[-2pt]
	\leq\,& 1 + \sum_{k=1}^\infty\left\{
	\frac{        \big(\sqrt{2}\lambda\sigma_1\big)^{2k} f_k(\rho_1)\,k!      }{(2k)!} +
	\frac{x_k     \big(\sqrt{2}\lambda\sigma_1\big)^{2k} f_k(\rho_1)\,k! +
		  x_k^{-1}\big(\sqrt{2}\lambda\sigma_1\big)^{2k+2}f_{k+1}(\rho_1)\,(k+1)!}
	{2\cdot(2k+1)!}
	\right\}\\[-2pt]
	=\,&   1 +
	\frac{ \big(\sqrt{2}\lambda\sigma_1\big)^2}{2!}\left(1+\frac{x_1}{6}\right)f_1(\rho_1)
	+ \sum_{k=2}^\infty
	\frac{ \big(\sqrt{2}\lambda\sigma_1\big)^{2k}k! }{(2k)!}
	\left(\frac{k}{x_{k-1}}+1+\frac{x_k}{4k+2}\right)f_k(\rho_1)\\[-2pt]
	=\,& 1 + \sum_{k=1}^\infty c_k(\rho_1)\frac{\big(2\lambda^2\sigma_1^2\big)^k }{k!}
\end{split}
\end{equation}
for all $\lambda\in\mathbb{R}$, where, by choosing $x_k=2(k+1)$ for $k\geq2$, the coefficients $c_k$
are given by
\begin{equation}\nonumber
	c_k(\rho_1)=\left\{
	\begin{aligned}
		&  \frac{1}{2}\left(1+\frac{x_1}{6}\right) f_1(\rho_1),
		~&&\textup{for }k=1;\\
		&  \frac{1}{6}\left(\frac{2}{x_1}+\frac{8}{5}\right) f_2(\rho_1),
		~&&\textup{for }k=2;\\
		&  \frac{(k!)^2}{(2k)!}\left( 2 + \frac{1}{4k+2}\right) f_k(\rho_1),
		~&&\textup{for }k=3,4,\dots.
	\end{aligned}
	\right.
\end{equation}
Following from the inequality
\begin{equation}\nonumber
\Big(1+\frac{x}{k+1}\Big)\sum_{i=0}^k
\frac{x^i}{i!}\geq \sum_{i=0}^{k+1}\frac{x^i}{i!}
\end{equation}
for $x\geq 0$ and $k\geq 0$, we have
\begin{equation}\nonumber
\left\{\begin{aligned}
	c_1	&=	  \frac{1}{2}\left(1+\frac{x_1}{6}\right)\left(1+\ln\rho_1\right);\\
	\frac{c_2}{c_1}	&=
	\frac{1}{3}\cdot\frac{2/x_1 + 8/5}{1+x_1/6} \cdot\frac{f_2(\rho_1)}{f_1(\rho_1)}\leq
	\frac{2}{3x_1}\frac{1+\frac{5}{6}x_1}{1+\frac{1}{6}x_1}\cdot\frac{f_3(\rho_1)}{f_2(\rho_1)}\\
	&\leq \frac{2}{3x_1}\left[\frac{3}{8}\Big(x_1-2\Big) +2 \right]
	\left(1+\frac{1}{2}\ln\rho_1\right),
	~&&\textup{when }x_1\in(0, 2];\\
	\frac{c_3}{c_2} &=
	\frac{3}{10}\cdot \frac{ 2 + 1/14 }{ 2/x_1 + 8/5 }\cdot\frac{f_3(\rho_1)}{f_2(\rho_1)}
	\leq \frac{87}{364}\left( 1+\frac{1}{3}\ln\rho_1\right),
	~&&\textup{when }x_1\in(0, 2];\\
	\frac{c_{k+1}}{c_k} &=
	\frac{(k+1)}{4(k+3/2)}\cdot\frac{(8k+13)}{(8k+5)}\cdot\frac{f_{k+1}(\rho_1)}{f_k(\rho_1)}
	<0.29\left( 1+\frac{1}{4}\ln\rho_1\right),
	~&&\textup{for }k=3,4,\dots.
\end{aligned}
\right.
\end{equation}
By choosing $x_1=x_1^\ast\in(0,2]$ such that
\begin{equation}
\frac{1}{2}\left(1+\frac{x_1^\ast}{6}\right)\left(1+\ln\rho_1\right)
=
\frac{2}{3x_1^\ast}\left[\frac{3}{8}\Big(x_1^\ast-2\Big) +2 \right]\left(1+\frac{1}{2}\ln\rho_1\right)
\geq \frac{2}{3},
\end{equation}
which leads to an explicit solution
\begin{equation}\nonumber
x_1^\ast = \frac{\sqrt{196 + 348\ln\rho_1 + 161(\ln\rho_1)^2}-(6 + 9\ln\rho_1)}{4(1 + \ln\rho_1)},
\end{equation}
we have $c_1(\rho_1)\leq c_1^\ast(\rho_1)$ and $c_{k+1}(\rho_1)/c_k(\rho_1)\leq c_1^\ast(\rho_1)$
for all $k\geq 1$ and all $\rho_1\geq 1$, where
\begin{equation}\nonumber
c_1^\ast(\rho_1) = 
\frac{\sqrt{196 + 348\ln\rho_1 + 161(\ln\rho_1)^2} +18 + 15\ln\rho_1}{48}
\leq
\frac{7\ln\rho_1+8}{12}.
\end{equation}
Therefore, we finally have
\begin{equation}\nonumber
\begin{split}
	\mathbb{E}\left[ \exp(\lambda X) \right]
	&\leq  1 + \sum_{k=1}^\infty c_k(\rho_1)\frac{ \big(2\lambda^2\sigma_1^2\big)^k }{k!}
	 \leq  1 + \sum_{k=1}^\infty c_1^\ast(\rho_1)^k
	 			\frac{ \big(2\lambda^2\sigma_1^2\big)^k }{k!}\\
	&\leq \exp\left( 2c_1^\ast(\rho_1)\sigma_1^2\lambda^2\right)
	 \leq \exp\left( \frac{7\ln\rho_1+8}{6} \sigma_1^2\lambda^2\right),
\end{split}
\end{equation}
which proves the claim of case~(c).\par
\vspace{3pt}
{\bf Case (d)}:
According to Theorem~\ref{Thm:equivalence} and Remark~\ref{Rmk:subgaussian1}, if property~(4) is
satisfied, then property~(3) is satisfied, with
$\sigma_3=\sigma_4\sqrt{2/x}$ and $\rho_3 = \rho_4/\sqrt{1-x}$ for any $x\in(0, 1)$.
Furthermore, given $\mathbb{E}\left[X\right]=0$, the proof for case~(a) indicates
\begin{equation}\nonumber
\mathbb{E}\left[ \exp(\lambda X) \right]
\leq
\exp\left( \frac{9}{16}\lambda^2\sigma_3^2\cdot \ln\rho_3 \right)
=
\exp\left( \lambda^2\sigma_4^2\cdot \frac{9}{8x}\ln\frac{\rho_4}{\sqrt{1-x}}\right)
\end{equation}
for all $\lambda$ such that $\lvert\lambda\rvert\sigma_3\leq 4/3$, i.e.,
$\lvert\lambda\rvert\sigma_4\leq \frac{4}{3}\sqrt{x/2}$. On the other hand, for all $\lambda$ such
that $\lvert\lambda\rvert\sigma_4\geq \frac{4}{3}\sqrt{x/2}$, we clearly have
\begin{equation}\nonumber
\mathbb{E}\left[ \exp(\lambda X) \right] \leq
\exp\Big(\frac{1}{2}\sigma_4^2 \lambda^2 + \ln\rho_4 \Big) \leq
\exp\Big(\frac{1}{2}\sigma_4^2 \lambda^2 + \frac{9\sigma_4^2\lambda^2}{8x}\ln\rho_4 \Big) =
\exp\Big(\sigma_4^2 \lambda^2\cdot\frac{4x+9\ln\rho_4}{8x} \Big).
\end{equation}
Also, we have $9(\ln\rho_4 - \ln\sqrt{1-x})\geq 9\ln\rho_4 + 4x$ for $x\in(0, 1)$,
since $-9\ln(1-x)\geq 8x$ can be verified easily.
Therefore, for all $\lambda\in\mathbb{R}$ we have
\begin{equation}\nonumber
\mathbb{E}\left[ \exp(\lambda X) \right] \leq
\exp\left( \frac{9}{8}\lambda^2\sigma_4^2
\min_{x\in(0,1)}\frac{1}{x}\ln\frac{\rho_4}{\sqrt{1-x}} \right),
\end{equation}
where the minimizing $x$ tends to $0$ and $1$ when $\rho_4$ approaches $1$ and $+\infty$,
respectively. Since the minimizing $x$ does not have an explicit expression, we choose the
following surrogate
\begin{equation}
x^\ast = \sqrt{(\ln\rho_4)^2 + 2\ln\rho_4} -\ln\rho_4
  = \frac{2\ln\rho_4}{\sqrt{(\ln\rho_4)^2 + 2\ln\rho_4}+\ln\rho_4},
\end{equation}
which leads to $(1-x^\ast)^{-1} = 1 + \ln\rho_4 + \sqrt{(\ln\rho_4)^2 + 2\ln\rho_4}$, and thus
\begin{equation}\nonumber
\begin{split}
&\min_{x\in(0,1)}\frac{1}{x}\ln\frac{\rho_4}{\sqrt{1-x}}
\leq
\frac{1}{x^\ast}\ln\frac{\rho_4}{\sqrt{1-x^\ast}}\\
&\quad=
\frac{\sqrt{(\ln\rho_4)^2 + 2\ln\rho_4}+\ln\rho_4}{2\ln\rho_4}
\left[ \ln\rho_4 +
\frac{1}{2}\ln\left( 1 + \ln\rho_4 + \sqrt{(\ln\rho_4)^2 + 2\ln\rho_4}  \right)
\right].
\end{split}
\end{equation}
Therefore, the claim in case (d) is proved.
\end{proof}

Note that the $\sigma^2$ values for $(\sigma, 1)$-subgaussian variables, explicitly provided in
Theorem~\ref{Thm:centered}, are derived directly from each of the subgaussian properties and are
not meant to be optimal. By translating between different equivalent properties, one can
potentially find a better $\sigma$, as demonstrated in the following example.\par

\begin{example}
Assume that $X$ is centered and satisfies
$\mathbb{E}\left[ X^{2k} \right] \leq \rho_2\cdot \sigma_2^{2k}\cdot k!$ for $k=1,2,3,\dots$, where
$\rho_2$ takes on the values $5\times10^{-3}$, $1$, or $10$.
According to case~\textup{(b)} of Theorem~\ref{Thm:centered}, $X$ is
$(0.803\,\sigma_2, 1)$-, $(1.18\,\sigma_2, 1)$-, or $(3.23\,\sigma_2, 1)$-subgaussian for each
respective value of $\rho_2$.\par
Meanwhile, Theorem~\ref{Thm:equivalence} and Remark~\ref{Rmk:subgaussian1} imply that
$\mathbb{E}\left[ \exp( 9X^2/10\sigma_2^2) \right]\leq 1+9\rho_2$. Consequently,
case~\textup{(a)} of Theorem~\ref{Thm:centered} indicates that $X$ is
$(0.782\,\sigma_2, 1)$-, $(1.70\,\sigma_2, 1)$-, or $(2.38\,\sigma_2, 1)$-subgaussian,
respectively.
\end{example}

While some texts define subgaussian variables by Eq.~\eqref{eq:equivalence_definition} fixing
$\rho=1$, some others~\cite{Wainwright2019,Deligiannidis2021} generalize the concept by allowing
nonzero expectations. They consider $X$ subgaussian if its centered version,
$X-\mathbb{E}\left[X\right]$, satisfies Definition~\ref{Def:subgaussian} with $\rho=1$;
this implies that a subgaussian variable plus any constant remains subgaussian. This treatment is
actually "equivalent" to the definition of $(\sigma, \rho)$-subgaussian variables provided here, as
will be clarified in Theorem~\ref{Thm:constant} and Corollary~\ref{Cor:centered}.\par

\begin{theorem}\label{Thm:constant}
Let $X\in\mathbb{R}$ be a $(\sigma, \rho)$-subgaussian random variable
and $c\in\mathbb{R}$ be a constant.
Then $X+c$ is $(\sigma', \rho')$-subgaussian, with
\begin{equation}\label{eq:centered_constant}
\frac{\sigma'}{\sigma} = \sqrt{1+x},\quad
\frac{\rho'}{\rho}     = \exp\Big(\frac{c^2}{2x\sigma^2} \Big),
\end{equation}
where $x$ is an arbitrary positive number.
\end{theorem}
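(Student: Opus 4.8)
The plan is to reduce everything to the definition of subgaussianity by pulling the constant shift out of the moment-generating function and then absorbing the resulting linear term into the Gaussian exponent. First I would write, for any $\lambda\in\mathbb{R}$,
\begin{equation}\nonumber
\mathbb{E}\left[\exp(\lambda(X+c))\right]
= \exp(\lambda c)\,\mathbb{E}\left[\exp(\lambda X)\right]
\leq \rho\exp\Big(\lambda c + \tfrac{1}{2}\sigma^2\lambda^2\Big),
\end{equation}
using only the $(\sigma,\rho)$-subgaussianity of $X$ from Definition~\ref{Def:subgaussian}. The whole problem then reduces to showing that the linear-plus-quadratic exponent $\lambda c + \frac{1}{2}\sigma^2\lambda^2$ is dominated by a slightly larger quadratic plus a constant.

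The key step is an elementary application of the arithmetic--geometric-mean inequality (equivalently, completing the square): for any $x>0$,
\begin{equation}\nonumber
\lambda c \leq \frac{1}{2}x\sigma^2\lambda^2 + \frac{c^2}{2x\sigma^2},
\end{equation}
which follows from $uv\leq\frac{1}{2}(u^2+v^2)$ applied with $u=\lambda\sigma\sqrt{x}$ and $v=c/(\sigma\sqrt{x})$ (and holds trivially when $\lambda c<0$). Substituting this bound into the exponent above gives
\begin{equation}\nonumber
\mathbb{E}\left[\exp(\lambda(X+c))\right]
\leq \rho\exp\Big(\frac{c^2}{2x\sigma^2}\Big)\exp\Big(\tfrac{1}{2}(1+x)\sigma^2\lambda^2\Big),
\end{equation}
so reading off $\sigma'^2=(1+x)\sigma^2$ and $\rho'=\rho\exp(c^2/(2x\sigma^2))$ recovers exactly the claimed relations in Eq.~\eqref{eq:centered_constant}.

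Finally I would check that the pair $(\sigma',\rho')$ is admissible under Definition~\ref{Def:subgaussian}: since $x>0$ we have $\sigma'=\sqrt{1+x}\,\sigma>0$, and since $\rho\geq 1$ while the exponential factor is at least $1$, we have $\rho'\geq 1$; hence $X+c$ is genuinely $(\sigma',\rho')$-subgaussian for every choice of the free parameter $x>0$. I do not expect any real obstacle here; the only design decision is how to split the cross term $\lambda c$, and this is precisely what the parameter $x$ encodes---small $x$ keeps $\sigma'$ close to $\sigma$ at the cost of a larger $\rho'$, and vice versa---so the single inequality above already delivers the entire one-parameter family of valid bounds.
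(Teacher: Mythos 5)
Your proposal is correct and is essentially the paper's own proof: both factor out $\exp(\lambda c)$, invoke the $(\sigma,\rho)$-subgaussianity of $X$, and then absorb the cross term $\lambda c$ via Young's inequality $\lambda c \leq \tfrac{1}{2}x\sigma^2\lambda^2 + \tfrac{c^2}{2x\sigma^2}$, which is identical to the paper's step of adding the nonnegative square $\tfrac{1}{2}x\sigma^2\bigl(\lambda - \tfrac{c}{x\sigma^2}\bigr)^2$ to the exponent. Your closing check that $\rho'\geq 1$ is a small bonus the paper leaves implicit.
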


\begin{corollary}
\label{Cor:centered}
A random variable $X\in\mathbb{R}$ is $(\sigma, \rho)$-subgaussian for some constants $\sigma>0$
and $\rho\geq 1$, if and only if $X-\mathbb{E}\left[X\right]$ is $(\sigma', 1)$-subgaussian for
some $\sigma'>0$.
\end{corollary}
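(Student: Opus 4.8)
The plan is to derive both implications of the corollary directly from Theorem~\ref{Thm:constant} (closure of subgaussianity under adding a constant) together with Theorem~\ref{Thm:centered} (a centered subgaussian variable is necessarily $(\sigma,1)$-subgaussian), so that essentially no fresh computation is required. The whole statement is a packaging of these two results, and the only genuine care needed concerns the well-definedness of the centering.

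For the forward implication, suppose $X$ is $(\sigma,\rho)$-subgaussian. I would first record that the defining inequality~\eqref{eq:equivalence_definition} forces the moment-generating function $\mathbb{E}[\exp(\lambda X)]$ to be finite for every $\lambda\in\mathbb{R}$; consequently all moments of $X$ are finite and, in particular, $\mathbb{E}[X]$ exists, so the centered variable $X-\mathbb{E}[X]$ is well defined. Writing $X-\mathbb{E}[X]=X+c$ with the constant $c=-\mathbb{E}[X]$, Theorem~\ref{Thm:constant} shows that $X-\mathbb{E}[X]$ is $(\sigma',\rho')$-subgaussian for suitable $\sigma',\rho'$, i.e.\ it satisfies property~(4) of Theorem~\ref{Thm:equivalence}. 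Since moreover $\mathbb{E}[X-\mathbb{E}[X]]=0$, Theorem~\ref{Thm:centered} (case~(d)) applies and yields that $X-\mathbb{E}[X]$ is $(\sigma'',1)$-subgaussian for some $\sigma''>0$, which is exactly the desired conclusion.

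For the converse, assume $X-\mathbb{E}[X]$ is $(\sigma',1)$-subgaussian for some $\sigma'>0$; in particular $\mathbb{E}[X]$ is finite, so that this statement is meaningful. Then $X=(X-\mathbb{E}[X])+\mathbb{E}[X]$ is obtained from a $(\sigma',1)$-subgaussian variable by adding the constant $c=\mathbb{E}[X]$, and Theorem~\ref{Thm:constant} immediately gives that $X$ is $(\sigma,\rho)$-subgaussian, with $\sigma=\sigma'\sqrt{1+x}$ and $\rho=\exp(c^2/(2x\sigma'^2))$ for any $x>0$. This completes both directions.

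The argument is almost entirely bookkeeping, so I do not expect a serious obstacle; the one point requiring attention is the well-definedness of the centering. In the forward direction this is guaranteed by the finiteness of the moment-generating function noted above, and in the backward direction it is already implicit in the hypothesis that $X-\mathbb{E}[X]$ is subgaussian. A secondary point is to make sure that, when invoking Theorem~\ref{Thm:centered}, I have verified both of its hypotheses — that the variable is subgaussian (supplied by Theorem~\ref{Thm:constant}) and that it is centered (immediate) — rather than applying it to $X$ itself, which need not be centered.
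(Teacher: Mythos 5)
Your proof is correct and follows essentially the same route as the paper, which likewise obtains the corollary by applying Theorem~\ref{Thm:constant} with $c=\pm\,\mathbb{E}\left[X\right]$ together with Theorem~\ref{Thm:centered}. Your additional remarks on the well-definedness of $\mathbb{E}\left[X\right]$ are a harmless (and reasonable) elaboration of details the paper leaves implicit.
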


\begin{proof}
Given the assumptions in Theorem~\ref{Thm:constant}, we have
\begin{equation}\nonumber
\begin{split}
\mathbb{E}\left[\exp\big(\lambda (X+c) \big)\right]
&=   
\exp\left(c\lambda\right)\mathbb{E}\left[\exp\left(\lambda X \right)\right]\\
&\leq
\rho\exp\Big(\frac{1}{2}\sigma^2\lambda^2 + c\lambda\Big)\\
&\leq
\rho\exp\left(\frac{1}{2}\sigma^2\lambda^2 + c\lambda
        + \frac{1}{2}x\sigma^2\Big( \lambda -\frac{c}{x\sigma^2} \Big)^2\right)\\
&=
\rho\exp\left(\frac{1}{2}\big(1+x\big)\sigma^2\lambda^2 
        + \frac{c^2}{2x\sigma^2}\right)
\end{split}
\end{equation}
for all $\lambda\in\mathbb{R}$ and $x>0$, hence proving Theorem~\ref{Thm:constant}.
Applying Theorem~\ref{Thm:constant} with $c=\pm\,\mathbb{E}\left[X\right]$
and Theorem~\ref{Thm:centered} leads to the corollary.
\end{proof}

\subsection{Closure under simple operations\label{sec:closure}}
Theorem~\ref{Thm:constant} may be viewed as a specific instance of the closure of subgaussianity
under sum{-\linebreak}mation, as a constant $c\in\mathbb{R}$ is trivially a
$\big(\sigma, \exp(c^2/2\sigma^2)\big)$-subgaussian variable,
according to Definition~\ref{Def:subgaussian}.
Some more general cases of the closure of subgaussianity are demonstrated by
Theorems~\ref{Thm:closure1} and~\ref{Thm:closure2}, where the discussion is based on properties~(4)
and~(3) in Theorem~\ref{Thm:equivalence}, respectively.
Obviously, the closure of subgaussianity can also be
expressed with respect to the other subgaussian properties, potentially by introducing additional
absolute constants.\par

\begin{theorem}\label{Thm:closure1}
Let $X_i\in\mathbb{R}$ $(i=1,2,3,\dots,n)$ be $n$ random variables that are
$(\sigma_i, \rho_i)$-subgaussian, respectively. Then we have
\begin{itemize}
\item[\textup{(i)}]
$X\coloneqq \sum_{i=1}^n X_i$ is $(\sigma, \rho)$-subgaussian, with
\begin{equation}\nonumber
	\sigma = \sum_{i=1}^n\sigma_i,\quad
	\ln\rho = \frac{\sum_{i=1}^n\sigma_i\ln\rho_i}{\sum_{i=1}^n\sigma_i};
\end{equation}
\item[\textup{(ii)}] $X\coloneqq \sum_{i=1}^n X_i$,
where all $X_i$ are independent from one another, is $(\sigma, \rho)$-subgaussian, with
\begin{equation}\nonumber
	\sigma = \sqrt{\textstyle\sum_{i=1}^n\sigma_i^2},\quad
	\rho = \prod_{i=1}^n \rho_i;
\end{equation}
\item[\textup{(iii)}] $X\coloneqq \max_{i=1,2,\dots,n}\{X_i\}$ is $(\sigma, \rho)$-subgaussian, with
\begin{equation}\nonumber
	\sigma =  \max_{1\leq i\leq n}\{\sigma_i\},\quad
	\rho   =  \sum_{i=1}^n \rho_i.
\end{equation}
\end{itemize}
\end{theorem}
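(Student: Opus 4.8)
The plan is to verify all three parts directly through the moment-generating-function characterization, property~(4) in Theorem~\ref{Thm:equivalence}, which is precisely the definition of $(\sigma,\rho)$-subgaussianity. Parts~(i) and~(ii) both treat a sum, so I would begin from the identity $\mathbb{E}\left[\exp(\lambda X)\right]=\mathbb{E}\left[\prod_{i=1}^n\exp(\lambda X_i)\right]$ and exploit Hölder's inequality and independence respectively, while part~(iii) concerning the maximum requires a separate argument that splits on the sign of $\lambda$. Part~(ii) is immediate: independence factorizes the expectation, giving $\mathbb{E}\left[\exp(\lambda X)\right]=\prod_i\mathbb{E}\left[\exp(\lambda X_i)\right]\leq\prod_i\rho_i\exp(\tfrac12\sigma_i^2\lambda^2)=\big(\prod_i\rho_i\big)\exp\big(\tfrac12(\sum_i\sigma_i^2)\lambda^2\big)$, which is exactly the claimed bound.

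For part~(i), I would apply the generalized Hölder inequality with exponents $p_i>1$ satisfying $\sum_i p_i^{-1}=1$, obtaining $\mathbb{E}\left[\prod_i\exp(\lambda X_i)\right]\leq\prod_i\mathbb{E}\left[\exp(p_i\lambda X_i)\right]^{1/p_i}\leq\prod_i\rho_i^{1/p_i}\exp\big(\tfrac12\sigma_i^2 p_i\lambda^2\big)$, since the subgaussian bound holds at the real argument $p_i\lambda$. This leaves a variance proxy $\sum_i\sigma_i^2 p_i$ and a prefactor $\prod_i\rho_i^{1/p_i}$ still depending on the free exponents. The crux is to choose the $p_i$ so as to minimize $\sum_i\sigma_i^2 p_i$ under the constraint $\sum_i p_i^{-1}=1$: by Cauchy--Schwarz, $(\sum_i\sigma_i)^2\leq\big(\sum_i\sigma_i^2 p_i\big)\big(\sum_i p_i^{-1}\big)=\sum_i\sigma_i^2 p_i$, with equality at $p_i=(\sum_j\sigma_j)/\sigma_i$. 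At this optimum the variance proxy equals $(\sum_i\sigma_i)^2$, and $\ln\prod_i\rho_i^{1/p_i}=\sum_i(\sigma_i/\sum_j\sigma_j)\ln\rho_i$, matching the stated $\sigma$ and $\ln\rho$ exactly. I would also remark that each $p_i>1$, so Hölder is legitimate, and that $\ln\rho\geq 0$ (a weighted average of the nonnegative $\ln\rho_i$), so $\rho\geq 1$ as required by Definition~\ref{Def:subgaussian}.

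For part~(iii) I would treat the two signs of $\lambda$ separately. For $\lambda\geq 0$, monotonicity of $\exp$ gives $\exp(\lambda\max_i X_i)=\max_i\exp(\lambda X_i)\leq\sum_i\exp(\lambda X_i)$, hence $\mathbb{E}\left[\exp(\lambda X)\right]\leq\sum_i\rho_i\exp(\tfrac12\sigma_i^2\lambda^2)\leq\big(\sum_i\rho_i\big)\exp(\tfrac12\sigma^2\lambda^2)$ with $\sigma=\max_i\sigma_i$, using $\sigma_i\leq\sigma$. For $\lambda<0$, since $\max_i X_i\geq X_j$ for any fixed index $j$ we have $\exp(\lambda\max_i X_i)\leq\exp(\lambda X_j)$, so $\mathbb{E}\left[\exp(\lambda X)\right]\leq\rho_j\exp(\tfrac12\sigma_j^2\lambda^2)\leq\big(\sum_i\rho_i\big)\exp(\tfrac12\sigma^2\lambda^2)$, where the last step uses $\rho_j\leq\sum_i\rho_i$ and $\sigma_j\leq\sigma$. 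The two halves combine to give the claim for all $\lambda\in\mathbb{R}$.

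The main obstacle is the constrained optimization in part~(i): once one recognizes that Hölder's inequality reduces the problem to minimizing $\sum_i\sigma_i^2 p_i$ subject to $\sum_i p_i^{-1}=1$ and that Cauchy--Schwarz supplies both the sharp value $(\sum_i\sigma_i)^2$ and the optimal exponents, the identification of $\sigma$ and $\rho$ is mechanical. Parts~(ii) and~(iii) are routine by comparison, requiring only the factorization under independence and the sign-split for the maximum.
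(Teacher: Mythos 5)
Your proposal is correct and follows essentially the same route as the paper: H\"{o}lder's inequality with exponents $p_i=\big(\sum_j\sigma_j\big)/\sigma_i$ for part~(i), factorization by independence for part~(ii), and bounding the exponential of the maximum by the sum of exponentials for part~(iii). The only cosmetic differences are that you apply the generalized H\"{o}lder inequality directly for all $n$ (and justify the exponents as optimal via Cauchy--Schwarz) where the paper treats $n=2$ and inducts, and that your sign-split in part~(iii) is unnecessary, since for any $\lambda$ the quantity $\exp(\lambda\max_i X_i)$ equals one of the positive terms $\exp(\lambda X_i)$ and is therefore bounded by their sum.
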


\begin{proof}
\\\indent
{\bf Case (i)}~(adjusted from Ref.~\cite{Rivasplata2012}):
Considering the case of $n=2$, we have
\begin{equation}\nonumber
\begin{split}
&\mathbb{E}\left[\exp(\lambda X\right)]
=\mathbb{E}\left[\exp(\lambda X_1)\,\exp(\lambda X_2)\right]\\
&\leq
\mathbb{E}\left[\exp\Big(\tfrac{\sigma_1+\sigma_2}{\sigma_1}
	\lambda X_1\Big)\right]^{\frac{\sigma_1}{\sigma_1+\sigma_2}}
\mathbb{E}\left[\exp\Big(\tfrac{\sigma_1+\sigma_2}{\sigma_2}
	\lambda X_2\Big)\right]^{\frac{\sigma_2}{\sigma_1+\sigma_2}}\\
&\leq
\left[\rho_1 \exp\left( \frac{1}{2}\sigma_1^2\cdot
	\frac{\lambda^2 (\sigma_1+\sigma_2)^2}{\sigma_1^2}\right)
	\right]^{\frac{\sigma_1}{\sigma_1+\sigma_2}}
\left[\rho_2 \exp\left( \frac{1}{2}\sigma_2^2\cdot
	\frac{\lambda^2 (\sigma_1+\sigma_2)^2}{\sigma_2^2}\right)
	\right]^{\frac{\sigma_2}{\sigma_1+\sigma_2}}\\
&=
\exp\left( \frac{1}{2}(\sigma_1+\sigma_2)^2\lambda^2
+ \frac{\sigma_1\ln\rho_1 + \sigma_2\ln\rho_2}{\sigma_1+\sigma_2}
 \right)
\end{split}
\end{equation}
for all $\lambda\in\mathbb{R}$, where we applied H\"{o}lder's inequality for the first inequality.
The conclusion generalizes to larger $n$ by induction.\par
\vspace{3pt}
{\bf Case (ii)}:
Given the assumption, we have
\begin{equation}\nonumber
\begin{split}
\mathbb{E}\left[\exp(\lambda X\right)]
&=\mathbb{E}\Big[ {\textstyle\prod_{i=1}^n} \exp(\lambda X_i) \Big]
={\textstyle\prod_{i=1}^n} \mathbb{E}\left[\exp(\lambda X_i)\right]\\
&\leq
{\displaystyle\prod_{i=1}^n}\;
	\rho_i \exp\Big( \frac{1}{2}\sigma_i^2\lambda^2\Big)
= \Big({\textstyle\prod_{i=1}^n} \rho_i\Big)\cdot
\exp\bigg( \frac{1}{2}\lambda^2\sum_{i=1}^n \sigma_i^2\bigg)
\end{split}
\end{equation}
for all $\lambda\in\mathbb{R}$, where we used the independence of all $X_i$.\par
\vspace{3pt}
{\bf Case (iii)}:
Given the assumption, we have
\begin{equation}\nonumber
\begin{split}
\mathbb{E}\left[ \exp(\lambda X) \right] =
\mathbb{E}\Big[ \exp\big(\lambda\max_{1\leq i\leq n}\{X_i\}\big) \Big]
&<
\mathbb{E}\Big[ {\textstyle \sum_{i=1}^n} \exp(\lambda X_i) \Big]\\
\leq
{\textstyle \sum_{i=1}^n} \rho_i\exp\big(\tfrac{1}{2}\sigma_i^2\lambda^2\big)
&\leq
\Big( {\textstyle \sum_{i=1}^n} \rho_i \Big)
\exp\Big[ \tfrac{1}{2}\lambda^2 \max_{1\leq i\leq n}\{\sigma_i^2\} \Big]
\end{split}
\end{equation}
for any $\lambda\in\mathbb{R}$, hence proving the claim.
\end{proof}

\begin{theorem}\label{Thm:closure2}
Let $X_i\in\mathbb{R}$ $(i=1,2,3,\dots,n)$ be $n$ random variables satisfying
$\mathbb{E}\left[ \exp(X_i^2/\sigma_i^2) \right]\leq \rho_i$, respectively.
Then we have
\begin{itemize}
\item[\textup{(i)}]
	$X$ such that $\lvert X\rvert \leq \sum_{i=1}^n \lvert X_i\rvert$ satisfies
	$\mathbb{E}\left[\exp(X^2/\sigma^2)\right]\leq \rho$, with
	\begin{equation}\nonumber
		\sigma = \sum_{i=1}^n\sigma_i,\quad
		\ln\rho = \frac{\sum_{i=1}^n\sigma_i\ln\rho_i}{\sum_{i=1}^n\sigma_i};
	\end{equation}
\item[\textup{(ii)}]
	$X$ such that $\lvert X\rvert\leq \sqrt{\sum_{i=1}^n X_i^2}$ satisfies
	$\mathbb{E}\left[\exp(X^2/\sigma^2)\right]\leq \rho$, with 
	\begin{equation}\nonumber
		\sigma = \sqrt{\textstyle\sum_{i=1}^n\sigma_i^2},\quad
		\ln\rho = \frac{\sum_{i=1}^n\sigma_i^2\ln\rho_i}{\sum_{i=1}^n\sigma_i^2};
	\end{equation}
\item[\textup{(iii)}]
	$X\coloneqq \sum_{i=1}^n X_i$, where all $X_i$ are independent from one another, satisfies
	$\mathbb{E}\left[\exp(X^2/\sigma^2)\right]\leq \rho$, with
	\begin{equation}\nonumber
		\sigma = \sqrt{\textstyle\sum_{i=1}^n\sigma_i^2},\quad
		\rho = \prod_{i=1}^n \rho_i;
	\end{equation}
\end{itemize}
\end{theorem}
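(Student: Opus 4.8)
The plan is to establish all three cases through a common two-step strategy: first derive an almost-sure pointwise inequality bounding $X^2/\sigma^2$ by a suitable linear combination of the quantities $X_i^2/\sigma_i^2$, and then exponentiate and take expectations, closing the argument either with H\"older's inequality (for cases (i) and (ii)) or with independence (for case (iii)).

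For case (i), since $\lvert X\rvert\leq\sum_{i=1}^n\lvert X_i\rvert$, I would apply the Cauchy--Schwarz inequality in the form $(\sum_i\lvert X_i\rvert)^2 = (\sum_i\sqrt{\sigma_i}\cdot\lvert X_i\rvert/\sqrt{\sigma_i})^2\leq(\sum_i\sigma_i)(\sum_i X_i^2/\sigma_i)$; with $\sigma=\sum_i\sigma_i$ this yields the pointwise bound $X^2/\sigma^2\leq\sum_i w_i(X_i^2/\sigma_i^2)$ with weights $w_i=\sigma_i/\sigma$ summing to one. Exponentiating gives $\exp(X^2/\sigma^2)\leq\prod_i\exp(X_i^2/\sigma_i^2)^{w_i}$, and the generalized H\"older inequality with conjugate exponents $1/w_i$ (which applies precisely because $\sum_i w_i=1$, and requires no independence) produces $\mathbb{E}[\exp(X^2/\sigma^2)]\leq\prod_i\rho_i^{w_i}$, i.e., $\ln\rho=\sum_i w_i\ln\rho_i=(\sum_i\sigma_i\ln\rho_i)/(\sum_i\sigma_i)$, exactly as claimed.

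Case (ii) follows the identical template with different weights: from $X^2\leq\sum_i X_i^2$ and $\sigma^2=\sum_i\sigma_i^2$ I would write $X^2/\sigma^2\leq\sum_i w_i(X_i^2/\sigma_i^2)$ with $w_i=\sigma_i^2/\sigma^2$, again a probability vector, and apply the same H\"older step to reach $\ln\rho=(\sum_i\sigma_i^2\ln\rho_i)/(\sum_i\sigma_i^2)$. For case (iii), where $X=\sum_i X_i$ with independent summands, the natural pointwise bound is the weighted Cauchy--Schwarz inequality $X^2=(\sum_i\sigma_i\cdot X_i/\sigma_i)^2\leq(\sum_i\sigma_i^2)(\sum_i X_i^2/\sigma_i^2)=\sigma^2\sum_i X_i^2/\sigma_i^2$, so that $\exp(X^2/\sigma^2)\leq\prod_i\exp(X_i^2/\sigma_i^2)$; here, rather than H\"older, I would invoke independence to factor $\mathbb{E}[\exp(X^2/\sigma^2)]\leq\prod_i\mathbb{E}[\exp(X_i^2/\sigma_i^2)]\leq\prod_i\rho_i$, giving $\rho=\prod_i\rho_i$.

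The main subtlety is selecting, in each case, the weights that make the pointwise quadratic inequality reproduce precisely the advertised $(\sigma,\rho)$: for (i) and (ii) the coefficients must form a probability vector so that generalized H\"older applies, whereas for (iii) the Cauchy--Schwarz step is arranged so the coefficients equal one and independence---not H\"older---delivers the product $\prod_i\rho_i$. I would stress that case (iii) cannot simply reuse the bound $X^2\leq\sum_i X_i^2$ of case (ii), since $(\sum_i X_i)^2\neq\sum_i X_i^2$ in general; the $\sigma^2$-weighted Cauchy--Schwarz bound is indispensable, and only after exponentiating does independence let the estimate collapse to $\prod_i\rho_i$. A tempting alternative---passing to property~(4), applying Theorem~\ref{Thm:closure1}(ii), and converting back via the equivalences---would instead introduce a spurious $1/\sqrt{1-\lambda}$ factor, which is exactly why the direct route above is preferable for obtaining the clean constants.
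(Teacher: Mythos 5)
Your proposal is correct and takes essentially the same route as the paper: the same pointwise bounds $X^2/\sigma^2\leq\sum_i w_i\,X_i^2/\sigma_i^2$ with weights $w_i=\sigma_i/\sigma$ (case (i)) and $w_i=\sigma_i^2/\sigma^2$ (case (ii)) closed by H\"older's inequality, and Cauchy--Schwarz (what the paper calls Sedrakyan's inequality) plus independence for case (iii). The only cosmetic difference is that you treat general $n$ in one stroke via Cauchy--Schwarz and generalized H\"older, whereas the paper proves the $n=2$ case and then inducts.
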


\begin{proof}
\\\indent
{\bf Case~(i)}:
Considering the case of $n=2$, we have
\begin{equation}\nonumber
\frac{X^2}{(\sigma_1+\sigma_2)^2}
\leq
\frac{(\lvert X_1\rvert+\lvert X_2\rvert)^2}{(\sigma_1+\sigma_2)^2}
\leq
\frac{(1+\sigma_2\sigma_1^{-1})X_1^2+(1+\sigma_1\sigma_2^{-1})X_2^2}{(\sigma_1+\sigma_2)^2}
=
\frac{\sigma_1^{-1}X_1^2 + \sigma_2^{-1}X_2^2}{\sigma_1+\sigma_2},
\end{equation}
and we further find
\begin{equation}\nonumber
\begin{split}
\mathbb{E}\left[\exp\Big(\tfrac{X^2}{(\sigma_1+\sigma_2)^2}\Big)\right]
&\leq
\mathbb{E}\left[
\exp\Big(\tfrac{\sigma_1}{\sigma_1+\sigma_2}\cdot {X_1^2}/{\sigma_1^2}\Big)\cdot
\exp\Big(\tfrac{\sigma_2}{\sigma_1+\sigma_2}\cdot {X_2^2}/{\sigma_2^2}\Big)\right] \\
&\leq
\mathbb{E}\Big[\exp\big({X_1^2}/{\sigma_1^2}\big)\Big]^{\frac{\sigma_1}{\sigma_1+\sigma_2}}
\cdot
\mathbb{E}\Big[\exp\big({X_2^2}/{\sigma_2^2}\big)\Big]^{\frac{\sigma_2}{\sigma_1+\sigma_2}}\\
&\leq
\rho_1^{{\sigma_1}/(\sigma_1+\sigma_2)}\rho_2^{{\sigma_2}/(\sigma_1+\sigma_2)},
\end{split}
\end{equation}
according to H\"{o}lder's inequality. The conclusion generalizes to larger $n$ by induction.\par
\vspace{3pt}
{\bf Case~(ii)}:
Considering the case of $n=2$, we have
\begin{equation}\nonumber
\begin{split}
\mathbb{E}\left[\exp\Big(\tfrac{X^2}{\sigma_1^2+\sigma_2^2}\Big)\right]
&\leq
\mathbb{E}\left[
\exp\Big(\tfrac{\sigma_1^2}{\sigma_1^2+\sigma_2^2}\cdot {X_1^2}/{\sigma_1^2}\Big)\cdot
\exp\Big(\tfrac{\sigma_2^2}{\sigma_1^2+\sigma_2^2}\cdot {X_2^2}/{\sigma_2^2}\Big)
\right] \\
&\leq
\mathbb{E}\Big[\exp\big({X_1^2}/{\sigma_1^2}\big)\Big]^{\sigma_1^2/(\sigma_1^2+\sigma_2^2)}
\cdot
\mathbb{E}\Big[\exp\big({X_2^2}/{\sigma_2^2}\big)\Big]^{\sigma_2^2/(\sigma_1^2+\sigma_2^2)}\\
&\leq
\rho_1^{{\sigma_1^2}/(\sigma_1^2+\sigma_2^2)}
\rho_2^{{\sigma_2^2}/(\sigma_1^2+\sigma_2^2)},
\end{split}
\end{equation}
according to H\"{o}lder's inequality. The conclusion generalizes to larger $n$ by induction.\par
\vspace{3pt}
{\bf Case~(iii)}:
Given the assumption, we have
\begin{equation}\nonumber
\begin{split}
\mathbb{E}\left[
\exp\Big( X^2/{\textstyle \sum_{i=1}^n}\sigma_i^2  \Big)\right]
&=
\mathbb{E}\left[
\exp\Big( \big({\textstyle \sum_{i=1}^n}X_i\big)^2/{\textstyle \sum_{i=1}^n}\sigma_i^2
\Big)\right]
\leq
\mathbb{E}\left[
\exp\Big( {\textstyle \sum_{i=1}^n} \big(X_i^2/\sigma_i^2\big)\Big)\right]\\
&=
\textstyle{\prod_{i=1}^n \mathbb{E}\Big[ \exp\big( X_i^2/\sigma_i^2 \big)\Big]
\leq
\prod_{i=1}^n \rho_i},
\end{split}
\end{equation}
where we used Sedrakyan's inequality for the first sign of inequality and the independence of all
$X_i$ for the second.
\end{proof}

\subsection{Martingale difference with subgaussianity\label{sec:martingale}}
A martingale is a sequence of random variables where the expected values remain unchanged over time,
given its past history. Martingales are widely used in the study of stochastic processes, including
fair gambling, asset price changes, algorithms for stochastic optimization, and more.
In this note, we consider vector-valued martingales with subgaussian differences and apply the
results from previous subsections to conduct a large deviation analysis of the martingales, as
summarized in the following Theorem~\ref{Thm:martingale}. This theorem, where assumptions (I), (II),
and (III) progressively loosen the subgaussianity condition of the vector martingale differences,
compiles existing results from~\cite[Lemma~2]{Lan2012},~\cite[Lemma~6]{Jin2019},
\cite[Theorem 2.2.2]{Pauwels2020}, and~\cite[Theorem 7]{Deligiannidis2021}.\par

\begin{theorem}\label{Thm:martingale}
Let $\{X_i\}_{i=1,2,3\dots}$ be a stochastic process, $\{\mathcal{F}_i\}_{i=1,2,3\dots}$ be the
filtrations of corresponding $\sigma$-fields up to time $i$,
and let $\bm{\phi}_i = [\phi_{i1}~\phi_{i2}\;\dots\;\phi_{id} ]^\top \in \mathbb{R}^d$ be given by
deterministic measurable functions $\bm{\phi}_i = \bm{\phi}_i(X_1, X_2, \dots, X_i)$ such that
$\mathbb{E}\left[\bm{\phi}_i \vert \mathcal{F}_{i-1} \right]= \bm{0}$ for all $i$.
Furthermore, we consider the following conditions:
\begin{itemize}
\item[\textup{(I)}]
	$\mathbb{E}\big[
	\exp(\phi_{ij}^2/\sigma_{ij}^2)\big\vert \mathcal{F}_{i-1}\big] \leq \exp(1)$
	where $\sum_{j=1}^d \sigma_{ij}^2\leq \sigma_i^2$,
	for all $i=1,2,3,\dots,n$ and $j=1,2,3,\dots,d$,
	with $\sigma_{ij}, \sigma_i>0$;
\vspace{1pt}
\item[\textup{(II)}]
	$\mathbb{E}\big[
	\exp(\norm{\bm{\phi}_i}^2\mkern-2.5mu/\sigma_i^2)\big\vert \mathcal{F}_{i-1}\big]
	\leq \exp(1)$ for all $i=1,2,3,\dots, n$, with $\sigma_i>0$;
\vspace{1pt}
\item[\textup{(III)}]
	$\mathbb{E}\big[ \exp\big((\bm{e}_\textup{u}^\top\bm{\phi}_i)^2/\sigma_i^2\big)
	\big\vert \mathcal{F}_{i-1}\big] \leq \exp(1)$ for any unit vector
	 $\bm{e}_\textup{u}\in\mathbb{R}^d$ and all $i=1,2,3\dots,n$,
	 with $\sigma_i>0$.
\end{itemize}
Then for any $\lambda\geq 0$ we have
\begin{equation}
\begin{split}
&\mathbb{P}\bigg[ \Big\lVert{\textstyle\sum_{i=1}^n} \bm{\phi}_i\Big\rVert
	\geq
	\lambda\sqrt{{\textstyle\sum_{i=1}^n}\sigma_i^2} \bigg]
\leq \left\{
\begin{aligned}
	&2\exp(-\lambda^2/4),                &&\textup{if (I) holds;}\\[2pt] 
	&(d+1)\exp(-\lambda^2/3),            &&\textup{if (II) holds;}\\[2pt]
	&5^d \exp\left(-\lambda^2/12\right), &&\textup{if (III) holds,}
\end{aligned}
\right.
\end{split}
\end{equation}
and have
\begin{equation}
\mathbb{P}\bigg[
\bm{e}_\textup{u}^\top{\textstyle\sum^n_{i=1}\bm{\phi_i}}
\geq \lambda\sqrt{{\textstyle\sum^n_{i=1}}\sigma_i^2} \bigg]
\leq
\exp(-\lambda^2/3)
\end{equation}
for any unit vector $\bm{e}_\textup{u}\in\mathbb{R}^d$
given any of the conditions~\textup{(I)}, \textup{(II)}, and~\textup{(III)}.
\end{theorem}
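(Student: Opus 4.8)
The plan is to exploit the strict hierarchy \textup{(I)}$\implies$\textup{(II)}$\implies$\textup{(III)} among the hypotheses and to dispatch the scalar (second) conclusion once, reserving separate tail arguments for the three norm bounds. First I would record the hierarchy conditionally on $\mathcal{F}_{i-1}$: applying Theorem~\ref{Thm:closure2}(ii) to $\|\bm\phi_i\|=\sqrt{\sum_j\phi_{ij}^2}$ with the per-coordinate bounds of \textup{(I)} gives $\mathbb{E}[\exp(\|\bm\phi_i\|^2/\sum_j\sigma_{ij}^2)\mid\mathcal{F}_{i-1}]\le\exp(1)$, and $\sum_j\sigma_{ij}^2\le\sigma_i^2$ upgrades this to \textup{(II)}; Cauchy--Schwarz $(\bm{e}_{\mathrm u}^\top\bm\phi_i)^2\le\|\bm\phi_i\|^2$ then turns \textup{(II)} into \textup{(III)}. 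Hence under any hypothesis each fixed unit-vector projection $\bm{e}_{\mathrm u}^\top\bm\phi_i$ is conditionally centered and satisfies property~(3) with parameters $(\sigma_i,\exp(1))$. For the scalar conclusion I set $Y_i=\bm{e}_{\mathrm u}^\top\bm\phi_i$; since $\ln\rho_3=1\in[4/9,16/9)$, Theorem~\ref{Thm:centered}(a) gives the conditional estimate $\mathbb{E}[\exp(\theta Y_i)\mid\mathcal{F}_{i-1}]\le\exp(\tfrac34\sigma_i^2\theta^2)$, which telescopes through the tower property to show that $\bm{e}_{\mathrm u}^\top\sum_i\bm\phi_i$ is $(\sqrt{\tfrac32\sum_i\sigma_i^2},1)$-subgaussian; the generic Chernoff bound (4)$\implies$(5) then yields $\exp(-\lambda^2/3)$, proving the second displayed inequality uniformly.

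For the norm bound under \textup{(I)} I would argue coordinatewise: the same telescoping makes each $S_{nj}=\sum_i\phi_{ij}$ a $(\sqrt{\tfrac32 V_j},1)$-subgaussian with $V_j=\sum_i\sigma_{ij}^2$ and $\sum_j V_j\le V:=\sum_i\sigma_i^2$. Converting each $S_{nj}$ to property~(3) through the $(4)\!\implies\!(3)$ entry and combining them by Theorem~\ref{Thm:closure2}(ii) — whose proof uses only H\"older, so the (dependent) coordinates cause no trouble — gives the \emph{dimension-free} bound $\mathbb{E}[\exp(\tfrac{\nu}{3V}\|\sum_i\bm\phi_i\|^2)]\le(1-\nu)^{-1}\min\{\sqrt{1-\nu},2^\nu\}$, whose right-hand side equals $2$ at $\nu=\tfrac34$; Markov's inequality then produces $2\exp(-\lambda^2/4)$.

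Under \textup{(III)} I would run a covering argument instead: fixing a $\tfrac12$-net $\mathcal{N}$ of the unit sphere, for which $|\mathcal{N}|\le 5^d$ and $\|\sum_i\bm\phi_i\|\le 2\max_{\bm v\in\mathcal{N}}\bm v^\top\sum_i\bm\phi_i$, a union bound of the scalar tail over $\mathcal{N}$ evaluated at level $\lambda/2$ loses a factor $4$ in the exponent and gives $5^d\exp(-\lambda^2/12)$.

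The delicate case is \textup{(II)}, whose bound $(d+1)\exp(-\lambda^2/3)$ retains the \emph{undiminished} single-direction exponent while paying only the \emph{additive} factor $d+1$. Neither route above achieves this: a fixed-direction union bound — even over the $d+1$ vertices of a regular simplex, for which $\max_k\langle\bm u_k,\bm w\rangle\ge 1/\sqrt d$ — captures only a $1/\sqrt d$ fraction of the norm and so degrades the exponent to $\lambda^2/(3d)$, whereas the Gaussian identity $\exp(\tfrac{t}{2}\|M\|^2)=\mathbb{E}_{\bm g}\exp(\sqrt t\,\langle\bm g,M\rangle)$ restores the exponent $\lambda^2/3$ but only with a prefactor of order $(\lambda^2/d)^{d/2}$. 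Securing the clean $(d+1)$ is therefore the crux, and I expect to obtain it from the dimension-dependent martingale estimate of~\cite{Pauwels2020,Jin2019} rather than from the covering or chi-square arguments that settle \textup{(III)} and \textup{(I)}.
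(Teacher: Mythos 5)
Your scalar bound and cases (I) and (III) are correct, and they follow essentially the paper's own route: conditional use of Theorem~\ref{Thm:centered}(a) with $\ln\rho_3=1\in[4/9,16/9)$ to get $\mathbb{E}[\exp(\theta\,\bm{e}_\textup{u}^\top\bm{\phi}_i)\vert\mathcal{F}_{i-1}]\leq\exp(\tfrac34\sigma_i^2\theta^2)$, tower-property telescoping, and Chernoff for the directional tail; for (I), the per-coordinate conversion $(4)\implies(3)$ followed by Theorem~\ref{Thm:closure2}(ii) (which, as you note, uses only H\"older and so tolerates dependent coordinates) and Markov at $\nu=3/4$, exactly as in the paper; for (III), a $\tfrac12$-net of cardinality $5^d$ — the paper packages the net step as an MGF bound for a maximum via Theorem~\ref{Thm:closure1}(iii) plus Chernoff, while you take a union bound over tails at level $\lambda/2$, but the two are interchangeable and give the same constant $5^d\exp(-\lambda^2/12)$. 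Your derivation of the hierarchy (I)$\implies$(II)$\implies$(III) also coincides with the paper's closing argument.

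The genuine gap is case (II): you correctly diagnose that neither the net argument nor a fixed-direction union bound can produce $(d+1)\exp(-\lambda^2/3)$, but you then defer the entire case to the cited references, so the hardest claim of the theorem is left unproven. The missing idea is the self-adjoint dilation used in the paper (following~\cite{Jin2019}): set $\mathbf{\Phi}_i$ as in Eq.~\eqref{eq:martingale_proof_basic}, a symmetric $(d+1)\times(d+1)$ matrix whose nonzero eigenvalues are $\pm\norm{\bm{\phi}_i}$. Applying the two scalar estimates from the proof of Theorem~\ref{Thm:centered}(a) eigenvalue-wise yields the semidefinite bound $\mathbb{E}\left[\exp(\lambda\mathbf{\Phi}_i)\vert\mathcal{F}_{i-1}\right]\preceq\exp\big(\tfrac34\sigma_i^2\lambda^2\big)\mathbf{I}$ for all $\lambda\in\mathbb{R}$; the Golden--Thompson inequality $\operatorname{tr}\exp(\mathbf{A}+\mathbf{B})\leq\operatorname{tr}\big(\exp(\mathbf{A})\exp(\mathbf{B})\big)$~\cite{Golden1965,Thompson1965}, together with monotonicity of the trace pairing against the positive semidefinite factor $\exp(\lambda\sum_{i=1}^{n-1}\mathbf{\Phi}_i)$, then lets the trace-MGF telescope across the martingale despite the non-commutativity of the $\mathbf{\Phi}_i$, giving $\mathbb{E}\big[\operatorname{tr}\exp\big(\lambda\sum_{i=1}^n\mathbf{\Phi}_i\big)\big]\leq(d+1)\exp\big(\tfrac34\lambda^2\sum_{i=1}^n\sigma_i^2\big)$, where the prefactor $d+1$ is exactly $\operatorname{tr}\mathbf{I}$. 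Since $\operatorname{tr}\exp\big(\lambda\sum_{i=1}^n\mathbf{\Phi}_i\big)\geq\exp\big(\lambda\norm{\sum_{i=1}^n\bm{\phi}_i}\big)$, Chernoff yields $(d+1)\exp(-\lambda^2/3)$. Without this matrix-martingale device (or an equivalent), your proposal does not establish case (II), so the proof as written is incomplete.
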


\begin{proof}
\vspace{3pt}
\\\indent
{\bf Case (I)}:
Given that $\mathbb{E}\left[\bm{\phi}_i \vert \mathcal{F}_{i-1} \right] = \bm{0}$ and
$ \mathbb{E}\big[\exp(\phi_{ij}^2/\sigma_{ij}^2) \vert \mathcal{F}_{i-1}\big]\leq\exp(1)$,
Theorem~\ref{Thm:centered} (case~(a)) indicates
\begin{equation}\nonumber
\mathbb{E}\big[ \exp( x\phi_{ij}) \big\vert \mathcal{F}_{i-1} \big]\leq
	\exp\big( \tfrac{3}{4}\sigma_{ij}^2 x^2 \big)
\end{equation}
for any $x\in\mathbb{R}$, $1\leq i\leq n$, and $1\leq j\leq d$.
Then we have
\begin{equation}\label{eq:martingale_total_expectation}
\begin{split}
\mathbb{E}\big[ \exp(x {\textstyle\sum_{i=1}^n}\phi_{ij}) \big]
&=
\mathbb{E}\Big[ \exp(x {\textstyle\sum_{i=1}^{n-1}}\phi_{ij})
	\cdot\mathbb{E}\big[\exp(x \phi_{nj}) \big\vert \mathcal{F}_{n-1}\big]
	\Big]\\
&\leq
\exp\big( \tfrac{3}{4}\sigma_{nj}^2 x^2 \big)\cdot
\mathbb{E}\big[ \exp( x{\textstyle\sum_{i=1}^{n-1}}\phi_{ij}) \big]\\
&\leq \cdots \leq 
\exp\Big[ \tfrac{3}{4} x^2 {\textstyle\sum_{i=1}^n}\sigma_{ij}^2 \Big]
\end{split}
\end{equation}
for any $x\in\mathbb{R}$ and $1\leq j\leq d$. Following from the implication (4)$\implies$(3) in
Theorem~\ref{Thm:equivalence} and Remark~\ref{Rmk:subgaussian1}, we find
\begin{equation}\nonumber
\mathbb{E}\bigg[
	\exp\bigg( \frac{x(\sum_{i=1}^n\phi_{ij})^2}{3\sum_{i=1}^n\sigma_{ij}^2}
	\bigg)\bigg]
\leq
	\frac{1}{\sqrt{1-x}}
\end{equation}
for any $x\in(0, 1)$.
As $\norm{\sum_{i=1}^n \bm{\phi}_i}^2 = \sum_{j=1}^d (\sum_{i=1}^n \phi_{ij})^2$,
invoking assumption (I) and applying Theorem~\ref{Thm:closure2} (case (ii)), we find
\begin{equation}\nonumber
\mathbb{E}\bigg[
\exp\bigg( \frac{x \norm{\sum_{i=1}^n \bm{\phi}_i}^2}{3\sum_{i=1}^n\sigma_i^2}
\bigg)\bigg]
\leq
\mathbb{E}\bigg[
\exp\bigg( \frac{x \norm{\sum_{i=1}^n \bm{\phi}_i}^2}{3\sum_{j=1}^d\sum_{i=1}^n\sigma_{ij}^2}
\bigg)\bigg]
\leq
\frac{1}{\sqrt{1-x}}
\end{equation}
for any $x\in(0, 1)$. Finally, according to the implication (3)$\implies$(1) in
Theorem~\ref{Thm:equivalence}, we find
\begin{equation}\nonumber
\mathbb{P}\bigg[ \Big\lVert{\textstyle\sum_{i=1}^n} \bm{\phi}_i\Big\rVert
\geq \lambda \sqrt{\tfrac{3}{2x} {\textstyle\sum_{i=1}^n}\sigma_i^2  } \bigg]
\leq
\frac{1}{\sqrt{1-x}} \exp(-\lambda^2/2),
\end{equation}
or equivalently
\begin{equation}
\mathbb{P}\bigg[ \Big\lVert{\textstyle\sum_{i=1}^n} \bm{\phi}_i\Big\rVert
\geq
	\lambda\sqrt{{\textstyle\sum_{i=1}^n}\sigma_i^2  } \bigg]
\leq
	\frac{1}{\sqrt{1-x}} \exp(-x \lambda^2/3),
\end{equation}
for any $x\in(0, 1)$ and $\lambda\geq 0$. Choosing $x=3/4$ leads to
\begin{equation}
\mathbb{P}\bigg[ \Big\lVert{\textstyle\sum_{i=1}^n} \bm{\phi}_i\Big\rVert
\geq
\lambda\sqrt{{\textstyle\sum_{i=1}^n}\sigma_i^2  } \bigg]
\leq
2\exp(-\lambda^2/4),
\end{equation}
which proves the claim for case~(I). Note that if $d=1$, we can directly obtain
\begin{equation}
\mathbb{P}\bigg[
\Big\lVert{\textstyle\sum_{i=1}^n}\bm{\phi}_i\Big\rVert
\geq
	\lambda\sqrt{{\textstyle\sum_{i=1}^n}\sigma_i^2} \bigg]
	\leq 2\exp(-\lambda^2/3)
\end{equation}
from inequality~\eqref{eq:martingale_total_expectation} and the implication (4)$\implies$(1)
in Theorem~\ref{Thm:equivalence}.\par
\vspace{3pt}
{\bf Case (II)} (from Ref.~\cite{Jin2019}, with corrections):
Consider a random vector $\bm{\phi}\in\mathbb{R}^d$ satisfying
$\mathbb{E}\left[ \bm{\phi} \right] =\bm{0}$ and
$\mathbb{E}\big[\exp(\norm{\bm{\phi}}^2/\sigma^2)\big]\leq \exp(1)$.
First, we notice that the real symmetric matrix
\begin{equation}\label{eq:martingale_proof_basic}
\mathbf{\Phi} \coloneqq
\begin{bmatrix}
	\,0         & \;\bm{\phi}^\top \\
	\,\bm{\phi} & \mathbf{0}
\end{bmatrix}\in\mathbb{R}^{(d+1)\times(d+1)}
\end{equation}
has a rank of at most $2$ and has eigenvalues 0 (with multiplicity $d-1$) and $\pm\norm{\bm{\phi}}$.
Letting $\mathbf{A}\preceq\mathbf{B}$ denote that $\mathbf{A}-\mathbf{B}$ is negative semidefinite,
we have
\begin{equation}\nonumber
\begin{split}
\mathbb{E}\left[\exp(\lambda\mathbf{\Phi})\right]
&=
\mathbb{E}\left[\exp(\lambda\mathbf{P}\mathbf{D}\mathbf{P}^{-1})\right]
=
\mathbb{E}\left[\mathbf{P}\exp(\lambda\mathbf{D})\mathbf{P}^{-1}\right]\\
&\preceq
\mathbb{E}\left[
\lambda\mathbf{P}\mathbf{D}\mathbf{P}^{-1} +
\mathbf{P}\exp\big(\tfrac{9}{16}\lambda^2 \mathbf{D}^2\big)\mathbf{P}^{-1}\right]
=
\lambda\mathbb{E}\left[\mathbf{\Phi}\right] + 
\mathbb{E}\left[
\mathbf{P}\exp\big(\tfrac{9}{16}\lambda^2 \mathbf{D}^2\big)\mathbf{P}^{-1} \right]\\
&\preceq
\mathbb{E}\big[ \mathbf{P} \exp(\tfrac{9}{16}\lambda^2 \norm{\bm{\phi}}^2)\mathbf{I}
\mathbf{P}^{-1} \big]
=
\mathbb{E}\big[\exp(\tfrac{9}{16}\lambda^2 \norm{\bm{\phi}}^2)\big]\mathbf{I}\\
&\preceq
\exp\left( \tfrac{9}{16}\sigma^2 \lambda^2\right)\mathbf{I}
\end{split}
\end{equation}
for all $\lambda$ such that $\lvert\lambda\rvert\sigma\leq 4/3$, where
$\mathbf{\Phi} =\mathbf{P}\mathbf{D}\mathbf{P}^{-1}$ is the eigendecomposition of $\mathbf{\Phi}$
($\mathbf{P}$ being an orthogonal matrix and $\mathbf{D}$ diagonal). We also have
\begin{equation}\nonumber
\begin{split}
\mathbb{E}\left[ \exp(\lambda \mathbf{\Phi}) \right]
=
\mathbb{E}\left[ \mathbf{P}\exp(\lambda \mathbf{D})\mathbf{P}^{-1}\right]
&\preceq
\mathbb{E}\left[ \mathbf{P}\exp(\norm{\lambda\bm{\phi}})\mathbf{I}\mathbf{P}^{-1}\right]
=
\mathbb{E}\left[ \exp(\norm{\lambda\bm{\phi}})\right]\mathbf{I}\\
&\preceq
\mathbb{E}\big[ \exp( x\lambda^2\sigma^2/4 + x^{-1} \norm{\bm{\phi}}^2/\sigma^2) \big]
\mathbf{I}\\
&\preceq
\exp( x\lambda^2\sigma^2/4 + 1/x)\mathbf{I}
\end{split}
\end{equation}
for all $x\geq 1$. Thus, the argument in the proof for case~(a) of Theorem~\ref{Thm:centered} also
applies here, leading to
\begin{equation}\label{eq:martingale_proof_1}
\mathbb{E}\left[ \exp(\lambda \mathbf{\Phi}) \right]
\preceq
\exp\big(\tfrac{3}{4} \sigma^2\lambda^2\big)\mathbf{I}
\end{equation}
for all $\lambda\in\mathbb{R}$.
Second, for real symmetric matrices $\mathbf{A}$, $\mathbf{B}$, and $\mathbf{C}$, we have
\begin{equation}\label{eq:martingale_proof_2}
	\operatorname{tr}\exp(\mathbf{A}+ \mathbf{B}) \leq
	\operatorname{tr}\big(\exp(\mathbf{A})\exp(\mathbf{B})\big),
\end{equation}
which is known as the Golden--Thompson inequality~\cite{Golden1965,Thompson1965}, and it is easy
to verify that
\begin{equation}\label{eq:martingale_proof_3}
	\operatorname{tr}(\mathbf{C}\mathbf{A})\leq
	\operatorname{tr}(\mathbf{C}\mathbf{B})
\end{equation}
if $\mathbf{A} \preceq \mathbf{B}$ and $\mathbf{C}\succeq \mathbf{0}$.
By defining $\mathbf{\Phi}_i$ with $\bm{\phi}_i$ ($i=1,2,3,\dots,n$) according to
Eq.~\eqref{eq:martingale_proof_basic} and collecting all preparatory results, we find
\begin{equation}\nonumber
\begin{split}
\mathbb{E}\Big[
\operatorname{tr}\exp\big(\lambda{\textstyle\sum^n_{i=1}}\mathbf{\Phi}_i\big)
\Big]
&=	\mathbb{E}\Big[\mathbb{E}\Big[
	\operatorname{tr}
	\exp(\lambda{\textstyle\sum^{n-1}_{i=1}}\mathbf{\Phi}_i + \lambda\mathbf{\Phi}_n)
	\Big\vert\mathcal{F}_{n-1}\Big]\Big]\\
&\leq\mathbb{E}\Big[\mathbb{E}\Big[\operatorname{tr}\big(
	\exp(\lambda{\textstyle\sum^{n-1}_{i=1}}\mathbf{\Phi}_i)
	\exp(\lambda\mathbf{\Phi}_n)\big)
	\Big\vert\mathcal{F}_{n-1}\Big]\Big]\\
&=  \mathbb{E}\Big[ \operatorname{tr}\Big(
	\exp(\lambda{\textstyle\sum^{n-1}_{i=1}}\mathbf{\Phi}_i)\cdot
	\mathbb{E}\big[
	\exp(\lambda\mathbf{\Phi}_n) \big\vert\mathcal{F}_{n-1} \big]
	\Big)\Big]\\
&\leq\mathbb{E}\Big[ \operatorname{tr}\Big(
	\exp(\lambda{\textstyle\sum^{n-1}_{i=1}}\mathbf{\Phi}_i)\cdot
	\exp\big(\tfrac{3}{4}\sigma_n^2 \lambda^2\big)\,\mathbf{I}
	\Big)\Big]\\
&=  \exp\big(\tfrac{3}{4}\sigma_n^2 \lambda^2\big)
 	\mathbb{E}\Big[\operatorname{tr}
 	\exp\big(\lambda{\textstyle\sum^{n-1}_{i=1}}\mathbf{\Phi}_i\big)
 	\Big]
 	\leq \cdots\\
&\leq\exp\big(\tfrac{3}{4} \lambda^2
	{\textstyle\sum^{n}_{i=1}}\sigma_i^2\big)\operatorname{tr}\mathbf{I}\\
&= (d+1)\exp\big(\tfrac{3}{4}\lambda^2 {\textstyle\sum^{n}_{i=1}}\sigma_i^2\big)
\end{split}
\end{equation}
for any $\lambda\in\mathbb{R}$,
where we applied expression~\eqref{eq:martingale_proof_2} for the first sign of inequality, and
used~\eqref{eq:martingale_proof_1} and~\eqref{eq:martingale_proof_3} for the second.
Note that
\begin{equation}\nonumber
\operatorname{tr}\exp\big(\lambda{\textstyle\sum^n_{i=1}}\mathbf{\Phi}_i\big)
=
\exp( \lambda\norm{\textstyle\sum^n_{i=1}\bm{\phi}_i}) +
\exp(-\lambda\norm{\textstyle\sum^n_{i=1}\bm{\phi}_i}) +(d-1)
\end{equation}
for any $\lambda\in\mathbb{R}$; therefore, we have
\begin{equation}
\mathbb{E}\Big[ \exp\big(x\norm{{\textstyle\sum^{n}_{i=1}}\bm{\phi}_i}\big) \Big]
\leq
\big(d+1\big)
\exp\Big(\tfrac{3}{4}x^2{\textstyle\sum^n_{i=1}}\sigma_i^2\Big)
\end{equation}
for any $x\in\mathbb{R}$. This finally leads to
\begin{equation}
\mathbb{P}\bigg[
\Big\lVert{\textstyle\sum^n_{i=1}} \bm{\phi}_i\Big\rVert
\geq
\lambda\sqrt{{\textstyle\sum^n_{i=1}}\sigma_i^2} \bigg]
\leq
(d+1)\exp(-\lambda^2/3)
\end{equation}
for all $\lambda\geq 0$, according to the implication (4)$\implies$(1)
in Theorem~\ref{Thm:equivalence} and Remark~\ref{Rmk:subgaussian2}.\par
\vspace{3pt}
{\bf Case (III)}:
We notice that the Euclidean unit ball in $\mathbb{R}^d$ can be covered by $(2/\epsilon+1)^d$
Euclidean balls of radius $\epsilon$ centered within the unit ball, for any $\epsilon\in(0,1)$
(check Corollary 4.2.13 in~\cite{Vershynin2018}; Example 5.8 in~\cite{Wainwright2019}), and let
$\bm{c}_k$ denote the centers of these balls in such a cover
($k=1,2,\dots, \operatorname{floor}\big((1+2/\epsilon)^d\big)$).
Then for any given unit vector $\bm{e}_\textup{u}\in\mathbb{S}^{d-1}\mkern-3mu\subset\mathbb{R}^d$
there exists $k$ such that $\norm{\bm{e}_\textup{u} - \bm{c}_k}\leq\epsilon$, and we have
\begin{equation}\nonumber
\begin{split}
\norm{{\textstyle\sum^n_{i=1}}\bm{\phi}_i }
=
\max_{ \norm{\bm{e}_\textup{u}}=1 }
\left\{ \bm{e}_\textup{u}^\top{\textstyle\sum^n_{i=1}}\bm{\phi}_i \right\}
&\leq
\max_{ 1\leq k\leq (1+2/\epsilon)^d }
\left\{ \bm{c}_k^\top{\textstyle\sum^n_{i=1}}\bm{\phi}_i \right\}+
\max_{ \norm{\bm{e}}\leq\epsilon}
\left\{ \bm{e}^\top {\textstyle\sum^n_{i=1}}\bm{\phi}_i  \right\}\\
&\leq
\max_{ 1\leq k\leq (1+2/\epsilon)^d }
\left\{\bm{c}_k^\top{\textstyle\sum^n_{i=1}}\bm{\phi}_i  \right\} +
\epsilon\norm{{\textstyle\sum^n_{i=1}}\bm{\phi}_i },
\end{split}
\end{equation}
which indicates
\begin{equation}
0\leq \norm{{\textstyle\sum^n_{i=1}}\bm{\phi}_i}
\leq (1-\epsilon)^{-1}
\max_{ 1\leq k\leq (1+2/\epsilon)^d }
\left\{\bm{c}_k^\top {\textstyle\sum^n_{i=1}}\bm{\phi}_i \right\}
\end{equation}
for all $\epsilon\in(0,1)$.
Now, we examine the subgaussianity of $\bm{c}_k^\top\sum_{i=1}^n\bm{\phi}_i$.
Given the assumptions
$\mathbb{E}\left[ \exp\big((\bm{e}_\textup{u}^\top\bm{\phi}_i)^2/\sigma_i^2\big)\big\vert
\mathcal{F}_{i-1}\right] \leq \exp(1)$
and
$\mathbb{E}\left[ \bm{e}_\textup{u}^\top\bm{\phi}_i \big\vert \mathcal{F}_{i-1}\right]
= \bm{e}_\textup{u}^\top\mathbb{E}\left[ \bm{\phi}_i \big\vert \mathcal{F}_{i-1}\right] = 0$
for any unit vector $\bm{e}_\textup{u}$, Theorem~\ref{Thm:centered} indicates that
$ \bm{e}_\textup{u}^\top\bm{\phi}_i $, conditional on $\mathcal{F}_{i-1}$, is
$(3\sigma_i^2/2, 1)$-subgaussian, and 
\begin{equation}\nonumber
\begin{split}
\mathbb{E}\big[\exp(\lambda\bm{c}_k^\top{\textstyle\sum^n_{i=1}}\bm{\phi}_i) \big]
&=
\mathbb{E}\Big[\exp(\lambda\bm{c}_k^\top{\textstyle\sum^{n-1}_{i=1}}\bm{\phi}_i)\cdot
\mathbb{E}\big[\exp(\lambda\bm{c}_k^\top\bm{\phi}_n)\vert \mathcal{F}_{n-1} \big]\Big]\\
&\leq
\exp\big( \tfrac{3}{4}\norm{\bm{c}_k}^2\sigma_n^2\lambda^2 \big)
\mathbb{E}\big[\exp(\lambda\bm{c}_k^\top{\textstyle\sum^{n-1}_{i=1}}\bm{\phi}_i)\big]
\leq \dots\\
&\leq
\exp\Big[ \tfrac{3}{4}\lambda^2\norm{\bm{c}_k}^2{\textstyle\sum^n_{i=1}}\sigma_i^2 \Big]
\end{split}
\end{equation}
for all $\lambda\in\mathbb{R}$ and all $\bm{c}_k$.
Now, according to case~(iii) of Theorem~\ref{Thm:closure1},
\begin{equation}\nonumber
\begin{split}
\mathbb{E}\big[\exp(\lambda \norm{{\textstyle\sum^n_{i=1}}\bm{\phi}_i}) \big]
&\leq
\mathbb{E}\left[\exp\left( \frac{\lvert\lambda\rvert}{1-\epsilon}
{\max_k} \Big\{\bm{c}_k^\top {\textstyle\sum^n_{i=1}}\bm{\phi}_i \Big\}\right)
\right]\\
&\leq
\left(1+\frac{2}{\epsilon}\right)^d
\exp\left( \frac{\lambda^2}{(1-\epsilon)^2} 
\cdot\frac{3}{4}{\max_k}\big\{\norm{\bm{c}_k}^2\big\}  
\cdot{\textstyle\sum^n_{i=1}}\sigma_i^2\right)\\
&\leq
\left(1+\frac{2}{\epsilon}\right)^d
\exp\left( \frac{1}{(1-\epsilon)^2} 
\cdot\frac{3}{4}\lambda^2
\cdot{\textstyle\sum^n_{i=1}}\sigma_i^2\right)
\end{split}
\end{equation}
for all $\lambda\in\mathbb{R}$. Finally, we obtain
\begin{equation}
\mathbb{P}\bigg[
\Big\lVert{\textstyle\sum^n_{i=1}}\bm{\phi}_i\Big\rVert
\geq \frac{\lambda}{1-\epsilon}\sqrt{\tfrac{3}{2}{\textstyle\sum^n_{i=1}}\sigma_i^2}
\bigg]
\leq
\Big(1+\frac{2}{\epsilon}\Big)^d\exp(-\lambda^2/2),
\end{equation}
or equivalently
\begin{equation}\label{eq:martingale_case_III}
\mathbb{P}\bigg[
\Big\lVert{\textstyle\sum^n_{i=1}}\bm{\phi}_i\Big\rVert
\geq \lambda\sqrt{{\textstyle\sum^n_{i=1}}\sigma_i^2}\bigg]
\leq
\Big(1+\frac{2}{\epsilon}\Big)^d
\exp\left(-\frac{1}{3}\big(1-\epsilon\big)^2 \lambda^2 \right),
\end{equation}
for all $\lambda\geq 0$ and $\epsilon\in(0,1)$. By choosing $\epsilon = 1/2$ we obtain
\begin{equation}
\mathbb{P}\bigg[
\Big\lVert{\textstyle\sum^n_{i=1}}\bm{\phi}_i\Big\rVert
\geq \lambda\sqrt{{\textstyle\sum^n_{i=1}}\sigma_i^2}\bigg]
\leq
5^d \exp\left(-\lambda^2/12\right)
\end{equation}
for all $\lambda\geq 0$.\par
\pagebreak
The assumptions (I), (II), and (III) are progressively weaker, in the sense that
(I) implies (II) according to (ii) in Theorem~\ref{Thm:closure2}, and
(II) implies (III) as $(\bm{e}_\textup{u}^\top\bm{\phi}_i)^2\leq \norm{\bm{e}_\textup{u}}^2
\norm{\bm{\phi}_i}^2 = \norm{\bm{\phi}_i}^2$.
With $\mathbb{E}\left[ \bm{e}_\textup{u}^\top\bm{\phi}_i \vert\mathcal{F}_{i-1} \right]=0$, we have
$\mathbb{E}\big[ \exp(\lambda\bm{e}_\textup{u}^\top\bm{\phi}_i )
\vert \mathcal{F}_{i-1}\big] \leq \exp\big( 3\sigma_i^2 \lambda^2/4\big) $
for any unit vector $\bm{e}_\textup{u}\in\mathbb{R}^d$ and $i=1,2,3,\dots$, and thus
\begin{equation}\nonumber
\begin{split}
\mathbb{E}\big[\exp(\lambda\bm{e}_\textup{u}^\top{\textstyle\sum^n_{i=1}}\bm{\phi}_i)\big]
&=\mathbb{E}\Big[
	\exp(\lambda\bm{e}_\textup{u}^\top{\textstyle\sum^{n-1}_{i=1}}\bm{\phi}_i)\cdot
	\mathbb{E}\big[\exp(\lambda\bm{e}_\textup{u}^\top\bm{\phi}_n)\vert\mathcal{F}_{n-1}\big]\Big]\\
&\leq
	\exp\big( \tfrac{3}{4}\sigma_n^2\lambda^2\big)
	\mathbb{E}\big[\exp(\lambda{\textstyle\sum^{n-1}_{i=1}}\bm{\phi}_i)\big]
	\leq \cdots\\
&\leq\exp\Big(\tfrac{3}{4}\lambda^2{\textstyle\sum^n_{i=1}}\sigma_i^2 \Big)
\end{split}
\end{equation}
for all $\lambda\in\mathbb{R}$. Therefore, we have
\begin{equation}\nonumber
\mathbb{P}\bigg[
\bm{e}_\textup{u}^\top{\textstyle\sum^n_{i=1}\bm{\phi_i}}
	\geq \lambda\sqrt{{\textstyle\sum^n_{i=1}}\sigma_i^2} \bigg]
\leq
\exp(-\lambda^2/3)
\end{equation}
for any unit vector $\bm{e}_\textup{u}\in\mathbb{R}^d$ and $\lambda\geq 0$,
according to the implication (4)$\implies$(5) in Theorem~\ref{Thm:equivalence}
and Remark~\ref{Rmk:subgaussian1}.
\end{proof}

\begin{remark}
Smaller values of $\epsilon$ in inequality~\eqref{eq:martingale_case_III} can be chosen
when the variance proxy is prioritized. For example, by choosing $\epsilon=1/3$, we have
\begin{equation}\nonumber
\mathbb{P}\bigg[
	\big\lVert{\textstyle\sum^n_{i=1}}\bm{\phi}_i\big\rVert
	\geq \lambda\sqrt{{\textstyle\sum^n_{i=1}}\sigma_i^2}\bigg]
	\leq
	7^d \exp\left(-\tfrac{4}{27}\lambda^2\right)
	\leq
	7^d \exp\left(-\lambda^2/7\right)
\end{equation}
for all $\lambda\geq 0$.
\end{remark}

\begin{remark}
When subgaussianity of $\norm{\textstyle\sum^n_{i=1}\bm{\phi}_i}$ is described by
$\mathbb{E}[\exp(\norm{\textstyle\sum^n_{i=1}\bm{\phi}_i}^2\mkern-3mu/\sigma^2 )] \leq \rho$
with $\rho$ a constant, we will see
\begin{equation}\nonumber
\begin{split}
	&\frac{\sigma^2}{\textstyle\sum^n_{i=1}\sigma_i^2}
	= \left\{
	\begin{aligned}
		&\mathcal{O}(1),        &&\textup{if (I) holds;}\\[1pt] 
		&\mathcal{O}(\ln(d+1)), &&\textup{if (II) holds;}\\[1pt]
		&\mathcal{O}(d),        &&\textup{if (III) holds,}
	\end{aligned}
	\right.
\end{split}
\end{equation}
with details omitted here.
\end{remark}

\section{Summary\label{sec:summary}}
This note restates the most basic properties of subgaussian random variables using the definition
of $(\sigma, \rho)$-subgaussianity.
It includes the equivalence of various characterizations, the treatment of (centered)
$(\sigma, 1)$-subgaussians, the closure of subgaussianity under simple operations, and an
application to the large deviation analysis of subgaussian vector martingale differences. 
While this note adheres to well-established results,
it also provides flexibility in translating between and aligning these results,
as well as in manipulating their details.

\acks{The author acknowledges the support from
the Institute of AI and Beyond of the University of Tokyo,
and JST Moonshot R\&D Grant Number JPMJMS2021.}





\vskip 0.2in
\bibliography{Subgaussian}
\end{document}